\documentclass[a4paper]{scrartcl}
\usepackage[utf8]{inputenc}
\usepackage[T1]{fontenc}
\usepackage[english]{babel}
\usepackage{authblk}
\usepackage{amsmath, amsfonts, amssymb, amsthm, mathtools}
\usepackage{multicol}
\usepackage{enumerate}
\usepackage{calrsfs}
\usepackage{lmodern}
\usepackage{hyperref}

\DeclareMathAlphabet{\pazocal}{OMS}{zplm}{m}{n}

\newcommand{\N}{\mathbb N}

\newcommand{\D}{\displaystyle}

\newcommand{\F}{\pazocal F}

\newcommand{\K}{\pazocal K}

\newcommand{\M}{\pazocal M}
\newcommand{\dd}{\mathrm d}
\newcommand{\diam}{\mathrm{diam}}
\newcommand{\dist}{\mathrm{dist}}
\newcommand{\Lip}{\mathrm{Lip}}

\numberwithin{equation}{section}

\theoremstyle{definition}
\newtheorem{defi}{Definition}[section]

\theoremstyle{plain}
\newtheorem{thm}[defi]{Theorem}
\newtheorem{coro}[defi]{Corollary}
\newtheorem{prop}[defi]{Proposition}
\newtheorem{lem}[defi]{Lemma}
\newtheorem{exa}[defi]{Example}

\theoremstyle{remark}
\newtheorem{rem}[defi]{Remark}

\title{On Successive Approximations for Compact-Valued Nonexpansive Mappings}
\author{Emir Medjic}
\affil{\small Universit\"at Innsbruck, Department of Mathematics - 
	\texttt{emir.medjic@uibk.ac.at}}
\date{\today}

\begin{document}
\maketitle
\section*{Abstract}
We show that for a given initial point the typical, in the sense of Baire category, nonexpansive compact valued mapping $F$ has the following properties: there is a unique sequence of successive approximations and this sequence converges to a fixed point of $F$. In the case of separable Banach spaces we show that for the typical mapping there is a residual set of initial points that have a unique trajectory.\\
\textit{Keywords: }Banach space, generic property, set-valued nonexpansive mapping, successive approximations.\\ MSC2020: 47H04, 47H09, 47H10, 54E52.

\section{Introduction}
Fixed point theorems play an important role in various branches of mathematics. While Banach's fixed point theorem is valid for every (strictly) contractive self-mapping of a complete metric space, unfortunately there is no general fixed point theorem for nonexpansive mappings. Let $C$ be a closed, bounded and convex subset of a Banach space $X$. If $X$ is finite dimensional, Brouwer's fixed point theorem guarantees a fixed point for every continuous self-mapping of $C$. Since Benyamini and Sternfeld in \cite{BenyaminiSternfeld1983} showed that in infinite dimensional spaces the sphere is contractible, an analogous statement as Brouwer's cannot hold in infinite dimensional Banach spaces, even for Lipschitz-continuous mappings. For uniformly convex $X$ and for every nonexpansive self-mapping of $C$, i.e. for every mapping $f:C\to C$ with 
\[
\|f(x)-f(y)\|\leq \|x-y\|
\]
the Browder-G\"ohde-Kirk fixed point theorem implies the existence of a, possibly non-unique, fixed point. Herein lies the motivation for looking at nonexpansive mappings for fixed point theory. But also in these cases F. S. de Blasi and J. Myjak in \cite{DeBlasiMyak1976,DeBlasiMyjak1989} showed that the typical, in the sense of Baire category, nonexpansive mapping has a unique fixed point which can be approximated by iterating the mapping $f$. Since by \cite{BargetzDymondReich2017} the typical nonexpansive mapping is not a strict contraction, this behaviour cannot be explained by Banach's fixed point theorem. S. Reich and A. Zaslavski showed in \cite{ReichZaslavski2005,ReichZaslavski2008,ReichZaslavski2016} that, even in more general situations, the typical nonexpansive self mapping is contractive in the sense of Rakotch, i.e. it satisfies 
\[
\|f(x)-f(y)\|\leq \phi(\|x-y\|)\|x-y\|
\] 
for some decreasing mapping 
\[
\phi: [0,\diam(C)]\to [0,1]\text{ with }\phi(t)<1\text{ for }t>0.
\]
The mentioned fixed point theorems for single-valued mappings have one thing in common: The fixed point can be attained by iterating the mapping. This tends to be more difficult in the setting of a set-valued mapping, as it is a priori unclear what iterating such a mapping means. One way to achieve this is to take the union of the point images, which leads to lifting a compact-valued mapping to a mapping on the hyperspace of compact sets in $C$, i.e. lifting
\[
F:C\to \K(C)\text{ to a self-mapping }\tilde F: \K(C)\to \K(C),
\] which was done by Reich and Zaslavski in \cite{ReichZaslavski2002,ReichZaslavski2016-AnalysisForum,ReichZaslavski2017}. In the mentioned papers the authors have also shown that the typical compact-valued mapping is a contraction in the sense of Rakotch and has a fixed point, i.e. in the example above: there is a set $A\in \K(C)$ such that $A= \tilde F (A)$. Yet there are different approaches, for example the approach of successive approximations: starting with a point $x_0$ we take a sequence satisfying $x_{n+1}\in F(x_n)$, where we let $x_{n+1}$ be a minimizer of $\{\|x_n-y\| : y\in F(x_n)\}$. This was studied already in \cite{Pianigiani2015,BargetzReich2019}, but these papers about successive approximations of set-valued mappings focus on special mappings of the form $F=\{f,g\}$ with two nonexpansive single valued mappings $f,g:C\to C$. We show in this work that for a given initial point $x_0 \in C$ the set of mappings $F:C\to \K(C)$ with unique trajectory starting in $x_0$ is residual, improving results on successive approximations of compact-valued mappings. Moreover this sequence of successive approximations will converge to a fixed point of $F$, this can be seen in \cite{ReichZaslavski2002}.

Even though our main interest lies on mappings with compact images, a lot of our results will apply to mappings with closed and bounded point images. Set-valued mappings are of interest in various areas, for example Lipschitzian set-valued mappings are being studied in \cite{BQTA2021}. These set-valued mappings occur in a natural way, for example when computing the subdifferential of a convex mapping at a point. Recently unions of nonexpansive mappings have been considered by M. K. Tam and others; see, for example, \cite{DaoTam2019,Tam2018}. Also recently, global convergence and acceleration of fixed point iterations for union upper semicontinuous operators is being investigated by J. H. Alcantara and C. Lee; see \cite{AlcantaraLee2022}.

The properties of the typical compact subset of certain spaces have been extensively studied. In particular the question of the size of the sets of nearest and farthest points is well understood, see e.g. \cite{DeBlasi1997,DeBlasiZamfirescu1999,DeBlasiZhivkov2002}. J. Myjak and R. Rudnicki, in \cite{MyjakRyszard2001}, have shown that the typical compact set is hispid, i.e. the projection is non-unique at many points. Regarding the question how big the set of points is that have a single nearest point in an arbitrary subset of a strictly convex normed space, see \cite{Steckin1963}. In Section~\ref{sec:reverse-steckin} we provide an answer to the converse question in Theorem~\ref{thm:proj-close-sets}: Given an arbitrary point, how big is the set of closed and bounded sets that have a unique metric projection for the chosen point?

\section{Preliminaries}
Throughout this work let $X$ be a Banach space and $C\subset X$ be a non-empty, closed, convex and bounded subset of $X$. In particular $C$ does not have to be compact. When we talk about $C$ as a metric space we use the metric $\dd (x,y) = \|x-y\|$ for $x,y\in C$. For $x\in C$ and $r>0$ let $B_\dd(x,r)$ denote the open ball (and $\overline B_\dd (x,r)$ the closed ball) with radius $r$ and centre $x$ w.r.t. the metric $\dd$. If it is clear which metric is used, we will omit the subscript and use $B(x,r)$ instead of $B_\dd(x,r)$. 
\\
For $x,y \in C$  we set 
\[[x,y] : = \{ (1-\lambda)x + \lambda y ~|~ \lambda\in [0,1] \},\]
and since we assume that $C$ is convex, it is a subset of $C$. Further set $(x,y)= [x,y]\setminus\{x,y\}$ for any $x,y \in C$.
For $M\subset X$ and $x\in X$ we denote by 
\[\dist(x,M) = \inf_{y\in M}\|x-y\| \text{ and } \diam(M) = \sup_{x,y\in M}\|x-y\|,\]
the distance of $x$ to the set $M$ and the diameter of $M$ respectively.
\paragraph{Metric Hyperspace of Sets}
We consider the space of non-empty compact sets 
\begin{align}
\K(C) &:= \left \lbrace A\subset C ~\middle| ~ A \text{ is non-empty and compact}\right\rbrace \label{def:compact-sets}
\end{align}
and equip it with the Pompeiu-Hausdorff distance $H$, defined by 
\[H(A,B) := \max\left\{ \sup_{a\in A} \inf_{b\in B} \dd(a,b), \sup_{b\in B} \inf_{a\in A} \dd(a,b) \right \}.\]
It is well known that the metric space $(\K(C),H)$ is complete.

For two sets $A,B\subset X$ we set 
\[A+B = \{a+b ~ | ~ a\in A, b \in B\}.\]
\begin{lem}\label{lem:hausdorff}
	For subsets $A,B\subset X$ and $x\in X$ the inequality
	\[
	\dist(x,A) \leq \dist(x,B)+H(A,B)
	\]
	holds.
\end{lem}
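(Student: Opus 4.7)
The plan is to prove the inequality by a standard triangle-inequality chase, using the definition of the Pompeiu–Hausdorff distance to bound one of the intermediate terms. The key observation is that $H(A,B)$ controls, in particular, the quantity $\sup_{b \in B} \dist(b,A)$, which is exactly what one needs to relate $\dist(x,A)$ and $\dist(x,B)$ through an arbitrary point $b \in B$.

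Concretely, I would first fix arbitrary $a \in A$ and $b \in B$ and apply the triangle inequality in the norm to obtain $\|x-a\| \leq \|x-b\| + \|b-a\|$. Taking the infimum over $a \in A$ on both sides gives $\dist(x,A) \leq \|x-b\| + \dist(b,A)$. Next, I would estimate $\dist(b,A) \leq \sup_{b' \in B} \dist(b',A) \leq H(A,B)$ by the very definition of the Pompeiu–Hausdorff distance, which yields
\[
\dist(x,A) \leq \|x-b\| + H(A,B).
\]
Finally, taking the infimum over $b \in B$ produces the desired estimate $\dist(x,A) \leq \dist(x,B) + H(A,B)$.

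There is essentially no obstacle here: the only mild subtlety is the order of the two infima (one must take the infimum over $a \in A$ first, so that the term depending on $b$ becomes $\dist(b,A)$ and can be controlled by $H(A,B)$ uniformly in $b$, before taking the infimum over $b \in B$). If either $A$ or $B$ is empty the statement becomes vacuous or trivial under the usual convention that $\dist(x,\emptyset) = +\infty$, so one may tacitly assume both are non-empty. No completeness or compactness of $A$ and $B$ is needed for this elementary estimate.
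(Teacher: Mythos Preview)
Your proof is correct and essentially identical to the paper's own argument: both start from the triangle inequality $\|x-a\| \leq \|x-b\| + \|b-a\|$, take the infimum over $a\in A$ to obtain $\dist(x,A)\leq \|x-b\|+\dist(b,A)$, bound $\dist(b,A)$ by $H(A,B)$, and then take the infimum over $b\in B$. The only cosmetic difference is that the paper bounds $\dist(b,A)$ by $H(A,B)$ after taking the infimum over $b$ rather than before, but the logic is the same.
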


\begin{proof}
	For all $a\in A$ and $b\in B$ we have by the triangle inequality 
	\[
	\dd(x,a)\leq \dd(x,b)+\dd(a,b).
	\]
	Therefore we obtain
	\[
	\inf_{a\in A} \dd(x,a) \leq \dd(x,b) + \inf_{a\in A} \dd(a,b) = \dd(x,b)+\dist(b,A).
	\]
	Since this holds for every $b\in B$, we get
	\[
	\dist(x,A)\leq \inf_{b\in B} \left( \dd(x,b) +\dist(b,A) \right) \leq \dist(x,B)+H(A,B).
	\]
\end{proof}
\paragraph{Nonexpansive and Contractive Mappings}
Let $\M$ denote the set of all nonexpansive mappings $T:C\to \K(C)$. In this sense nonexpansive means that 
\[
H(T(x),T(y)) \leq \|x-y\|\text{ for all }x,y\in C.
\]
Moreover we call mappings in $\M$ strict contractions, if there exists $L\in [0,1)$ such that
\[
	H(T(x),T(y))\leq L\|x-y\| \text{ for all }x,y\in C.
\]
We equip $\M$ with the metric of uniform convergence, i.e. 
\[\rho(F,G) := \sup_{x\in C} H(F(x), G(x)), \quad \text{for }F,G\in \M, \]
which makes $(\M,\rho)$ a complete metric space.\\
A mapping $T\in \M$ is called contractive in the sense of Rakotch, see \cite{Rakotch1962}, if there exists some decreasing function \[\phi:[0,\mathrm{diam}(C)]\to [0,1]\text{ such that }\phi(t)<1 \text{ for all }t>0,\]  with
\[
H(T(x), T(y)) \leq  \phi(\|x-y\|) \|x-y\| 
\]
for all $x,y\in C$, i.e. $\phi$ only depends on the distance between $x$ and $y$.

\paragraph{Meagre and residual}
We call a subset $E$ of a topological space $X$ meagre, if it is a countable union of nowhere dense sets. A residual set is the complement of a meagre set.
\paragraph{Porosity}
We call a subset $E$ of a metric space $X$ porous at a point $x\in E$ if there exists $r_0>0$ and $\alpha>0$ such that for all $r\in (0,r_0)$ there exists a point $y\in {B(x,r)}$ such that ${B(y,\alpha r)}\cap E = \emptyset$.\\
We call $E$ porous if it is porous at all of its points and $\sigma$-porous if it is a countable union of porous sets.\\
Whether $r_0$ and $\alpha$ are chosen independent of $x$ or not does not matter for the $\sigma$-porosity of $E$, see \cite[p. 93]{BargetzDymondReich2017}. For the sake of completeness we repeat here the proof displayed there. Take a $\sigma$-porous set $\D E = \bigcup_{i=1}^\infty E_i$, where $E_i$ are porous for all $i$. For all $j,k\in\N$ set 
\[
E_i^{j,k} := \left\{ x\in E_i \ \middle| \ r_0(x)\geq \frac{1}{j}, \alpha(x) \geq \frac{1}{k}\right\}.
\]
Then $\D E=\bigcup_{i,j,k=1}^\infty E_i^{j,k}$ and each $E_i^{j,k}$ is porous with $r_0$ and $\alpha$ independent of $x$.
\paragraph{Metric Projections}
For a set $M\subset X$ and $x\in X$ we call the set 
\[
P_M(x):=\{ y\in M \ | \ \|x-y\|= \mathrm{dist}(x,M)\}
\]
the metric projection of $x$ onto $M$.
Naturally this set can be empty, even for closed sets $M$, or contain more than one point. Since continuous mappings attain their minimum on compact sets, compact sets always have a non-empty projection.
\paragraph{Successive Approximations}
Let $F \in \M$ and $N\in \N\cup \{\infty\}$. A \textit{trajectory w.r.t.} $F$ or \textit{a sequence of successive approximations w.r.t.} $F$ is a sequence $\{x_i\}_{i=1}^N $ in $C$ with
\[ 
x_{n+1}\in P_{F(x_n)}(x_n), \quad \text{ for }  1\leq n < N.
\]
An infinite sequence of successive approximations or trajectory $\{x_n\}_{n=0}^\infty$ is called regular, if $P_{F(x_n)}(x_n)$ is a singleton for all $n$.

\paragraph{Strict Contractions are Dense in $\M$}
The proof of the following Lemma is the same as in the case of single-valued mappings, which can be found for example in \cite{DeBlasiMyak1976}. We include it for the sake of self-containedness.
\begin{lem}\label{lem:contracting-to-a-point}
	The set of strict contractions is dense in $\M$. 
\end{lem}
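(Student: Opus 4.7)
The plan is to mimic the classical single-valued argument by contracting every image set towards a fixed basepoint. Fix an arbitrary $z \in C$ (any point works because $C$ is convex). Given $F \in \M$ and $\lambda \in (0,1)$, define
\[
F_\lambda(x) := (1-\lambda) F(x) + \lambda \{z\} = \bigl\{ (1-\lambda) y + \lambda z : y \in F(x) \bigr\}.
\]
Since $C$ is convex and $F(x) \cup \{z\} \subset C$, we get $F_\lambda(x) \subset C$, and $F_\lambda(x)$ is compact as the continuous affine image of the compact set $F(x)$. So $F_\lambda : C \to \K(C)$ is well defined.

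Next I would check two elementary identities about Hausdorff distance under the affine map $u \mapsto (1-\lambda)u + \lambda z$. First, applying the same affine map to two sets $A,B \subset X$ and noting that translation is an isometry while scaling by $(1-\lambda)$ rescales all pairwise distances by the same factor, one obtains $H((1-\lambda)A + \lambda\{z\},\, (1-\lambda)B + \lambda\{z\}) = (1-\lambda) H(A,B)$. Applied to $A = F(x)$ and $B = F(y)$ this gives
\[
H(F_\lambda(x), F_\lambda(y)) = (1-\lambda) H(F(x), F(y)) \leq (1-\lambda)\|x-y\|,
\]
so $F_\lambda$ is a strict contraction with constant $(1-\lambda)$, hence in $\M$.

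Second, for any $y \in F(x)$ the corresponding point $(1-\lambda)y + \lambda z \in F_\lambda(x)$ is at distance $\lambda \|y-z\| \leq \lambda\,\diam(C)$ from $y$, and conversely every point of $F_\lambda(x)$ is at distance at most $\lambda\,\diam(C)$ from its preimage in $F(x)$. Therefore
\[
H(F(x), F_\lambda(x)) \leq \lambda \,\diam(C),
\]
and taking the supremum over $x \in C$ yields $\rho(F, F_\lambda) \leq \lambda\,\diam(C)$. Given any $\varepsilon > 0$, choosing $\lambda < \varepsilon/\diam(C)$ (assuming $\diam(C) > 0$; the case $\diam(C) = 0$ is trivial) produces a strict contraction $F_\lambda$ within $\varepsilon$ of $F$, proving density.

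There is no real obstacle here; the only point worth a moment's care is the Hausdorff-distance computation $H(T(A), T(B)) = (1-\lambda)H(A,B)$ for the affine map $T$, which follows cleanly from the definition because the infima and suprema in the definition of $H$ commute with the affine change of variables.
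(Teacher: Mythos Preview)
Your proof is correct and follows essentially the same route as the paper's: both contract $F$ towards a fixed basepoint via $F_\lambda(x)=(1-\lambda)F(x)+\lambda\{z\}$, verify the Lipschitz constant drops by the factor $(1-\lambda)$, and bound $\rho(F,F_\lambda)\le\lambda\,\diam(C)$. Your justification of compactness (continuous affine image of a compact set) is in fact more self-contained than the paper's citation, and you handle the degenerate case $\diam(C)=0$ explicitly.
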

\begin{proof}
	Let $F\in \M, x_0\in C, \lambda\in (0,1)$ and $\varepsilon>0$. We will show that then the mapping 
	\[
	F_{\lambda,x_0}(x):= \lambda \{x_0\} + (1-\lambda) F(x)
	\]
	is a strict contraction satisfying $\rho(F_{\lambda,x_0},F)<\varepsilon$, i.e. for an arbitrary mapping $F\in\M$ we can find a strict contraction arbitrarily close to $F$. By \cite[p.119,120]{BargetzDymondReich2017} the mapping $F$ is well defined, i.e. it is a compact set for every $x\in C$. 
		
	\noindent For $x,y\in C$ and $\tilde{x}\in F(x)$ we obtain
	\begin{align*}
		&\inf \left\{\|\lambda x_0 + (1-\lambda)\tilde x - \lambda x_0 - (1-\lambda) \tilde y\| ~ \middle| ~ \tilde y \in F(y)\right\} \\
		& \hspace{3cm} =  \min \left\{(1-\lambda)\|\tilde{x}-\tilde{y}\| ~ \middle| ~ \tilde{y}\in F(y)\right\}\\
		& \hspace{3cm} = (1-\lambda) \min \left\{\|\tilde{x}-\tilde{y}\| ~ \middle| ~ \tilde{y}\in F(y)\right\}.
	\end{align*}
	The roles of $x$ and $y$ respectively are reversible in above calculation. Therefore we obtain
	\[
		H(F_{\lambda,x_0}(x),F_{\lambda,x_0}(y)) \leq (1-\lambda) H(F(x),F(y)) \leq (1-\lambda)\Lip(F) \|x-y\|.
	\]
	Further we get for $\tilde{x}\in F(x)$ that 
	\begin{align*}
		\inf &\left\{\| \tilde{x} - \lambda x_0 - (1-\lambda)y\| ~ \middle | ~ y\in F(x) \right\} \\
		&\leq \inf\left\{ \| \lambda (\tilde{x}-x_0) + (1-\lambda) (\tilde{x}-y)\| ~\middle |~ y\in F(x)\right\}\\
		&\leq \lambda \diam(C)
	\end{align*}
	Now choose $\lambda < \varepsilon/\diam(C)$ and we obtain that 
	\begin{align*}
		H(F(x),F_{\lambda,x_0}(x)) \leq \lambda \diam(C) < \varepsilon,
	\end{align*}
	which concludes the proof.
\end{proof}

\section{How many sets have a unique projection onto them for a fixed point?}\label{sec:reverse-steckin}
As mentioned in the introduction we show in some sense the converse direction to Theorem 1 in \cite{Steckin1963}. For a fixed $x\in X$ we take the complement of sets $M$ in $\K(X)$ which have a unique metric projection $P_M(x)$ and show that it is $\sigma$-porous.
The following example shows that being close to a set with unique projection does not imply that the projection is unique.
\begin{exa}
	Let $x\in X$ and $r,\varepsilon>0$ be arbitrary and set 
	\[
	M= \partial B(x,r) = \overline{B}(x,r)\setminus B(x,r).
	\]
	Further choose $y\in X$ with $r-\varepsilon < \|x-y\| <r$ and set $N = M\cup \{y\}$. Then $H(M,N)<\varepsilon$, yet $P_M(x) = M$ and $P_N(x) = \{y\}$. Moreover $\diam(P_M(x)) = 2r$.
\end{exa}
One can take various variations of the sets presented above, which leads to the question how many sets there actually are with a unique projection onto them. In the following Lemma we use a base idea similar to the idea in Lemma 1 in \cite{Steckin1963}.
\begin{lem}\label{lem:proj-close}
	Let $M\in \K(C), x\in C\setminus M$ and $y\in P_M(x)$. Then for $z\in (x,y)$ with $\dist(z,M)>0$ the set 	
	\begin{align}\label{eq:def-set-with-point}
	N := M\cup \{z\}
	\end{align}
	satisfies for every $0<\varepsilon\leq \frac{\dist(z,M)}{3}$
	\begin{align}
	P_{N'}(x) \subset \overline{B}\left(P_N(x), \varepsilon\right) = \overline B\left(z, \varepsilon\right) \text{ for all }N'\in B_H\left(N, \varepsilon\right).
	\end{align}
\end{lem}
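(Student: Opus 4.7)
The plan is first to check that $P_N(x)=\{z\}$ (so that the equality $\overline B(P_N(x),\varepsilon)=\overline B(z,\varepsilon)$ in the statement is trivially true), and then, for an arbitrary $N'\in B_H(N,\varepsilon)$, to rule out the possibility that a minimizer of $\|x-\cdot\|$ on $N'$ sits near $M$ rather than near $z$. The first point is immediate: since $z\in (x,y)$ with $y\in P_M(x)$, one has $\|x-z\|<\|x-y\|=\dist(x,M)$, so $z$ is strictly the unique closest point of $N=M\cup\{z\}$ to $x$.

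For the main inclusion I would fix $N'\in B_H(N,\varepsilon)$ and an arbitrary $p\in P_{N'}(x)$, and use $\dist(p,N)\leq H(N',N)\leq \varepsilon$ together with compactness of $N$ to produce $q\in N$ with $\|p-q\|\leq \varepsilon$. If $q=z$ we are done. If instead $q\in M$, I would bound $\|x-p\|$ from both sides. The lower estimate $\|x-p\|\geq \|x-q\|-\varepsilon\geq \|x-y\|-\varepsilon$ comes from $y\in P_M(x)$. The upper estimate $\|x-p\|\leq \|x-z\|+\varepsilon$ comes from picking $z'\in N'$ with $\|z'-z\|\leq \varepsilon$ (which exists because $\dist(z,N')\leq H(N,N')\leq \varepsilon$ and $N'$ is compact) and then using the minimality of $p$, namely $\|x-p\|\leq \|x-z'\|$. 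Combining the two yields
\[
\|x-y\|-\|x-z\|\leq 2\varepsilon.
\]

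The key final step, where the collinearity hypothesis $z\in [x,y]$ really earns its keep, is the identity $\|x-y\|-\|x-z\|=\|y-z\|$. With this in hand, the bound above upgrades to $\dist(z,M)\leq \|y-z\|\leq 2\varepsilon$, which contradicts $\varepsilon\leq \dist(z,M)/3$, and the case $q\in M$ is ruled out. I expect this collinearity step to be the only nontrivial part of the argument: without $z\in[x,y]$ the triangle inequality only gives $\|y-z\|\leq \|x-y\|+\|x-z\|$, going the wrong way, and no useful lower bound on $\dist(z,M)$ can be extracted. Everything else reduces to standard Hausdorff-distance bookkeeping of the Lemma~\ref{lem:hausdorff} type together with compactness of $N$ and $N'$.
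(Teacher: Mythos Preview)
Your argument is correct and follows essentially the same approach as the paper's proof: both hinge on the collinearity identity $\|x-y\|-\|x-z\|=\|z-y\|\geq\dist(z,M)\geq 3\varepsilon$ combined with the Hausdorff bound $H(N,N')<\varepsilon$ to separate points of $N'$ near $M$ from points near $z$. The only cosmetic difference is that the paper argues directly (every $y'\in N'\setminus\overline B(z,\varepsilon)$ satisfies $\|x-y'\|\geq\|x-z\|+2\varepsilon>\|x-z'\|$ for some $z'\in N'\cap\overline B(z,\varepsilon)$), whereas you phrase the same comparison as a contradiction.
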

\begin{proof}
	By construction $\overline B(z,\varepsilon)\cap M = \emptyset$ holds and let $N'\in B_H(N,\varepsilon)$. Moreover \linebreak $P_N(x)=\{z\}$ holds since $\|x-z\| < \inf_{z'\in M} \|x-z'\| $. Observe that $N'\cap \overline{B}(z,\varepsilon)\not=\emptyset$.
	For every $y'\in N'\setminus\overline{B}(z,\varepsilon)$ we have
	\[
	\|x-y'\| \geq\dist(x,M)-\varepsilon \geq \|x-z\| + 2\varepsilon.
	\]
	But for $z'\in B(z,\varepsilon)$ the following holds:
	\[ 
	\|x-z'\| < \|x-z\|+\varepsilon<\|x-y'\|. 
	\]
	From this we conclude $P_{N'}(x)\subset \overline B(z,\varepsilon)$. 
\end{proof}
\begin{thm} \label{thm:proj-close-sets}
	Let $X$ be a Banach space and $C\subset X$ a closed, bounded and convex set. Fix an arbitrary point $x\in C$. Then there is a set $\pazocal{B} \subset \K(C)$ such that $\K(C)\setminus \pazocal{B}$ is $\sigma$-porous and for every $M\in \pazocal{B}$ the set $P_M(x)$ is a singleton.
\end{thm}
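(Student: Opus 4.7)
The plan is to exhibit $\K(C)\setminus\pazocal{B}$, where $\pazocal{B}$ denotes the set of $M\in\K(C)$ with $P_M(x)$ a singleton, as a countable union of porous sets. Observe first that if $x\in M$ then $P_M(x)=\{x\}$, so every $M$ outside $\pazocal{B}$ has $x\notin M$ and $\diam P_M(x)>0$. To obtain uniform porosity constants, I would stratify the complement of $\pazocal{B}$ by both the diameter of the projection and the distance from $x$: for $n,k\in\N$ put
\[
E_{n,k}:=\{M\in\K(C) : \diam P_M(x)\geq 1/n,\ \dist(x,M)\geq 1/k\}.
\]
Then $\K(C)\setminus\pazocal{B}=\bigcup_{n,k\in\N}E_{n,k}$, so it suffices to show each $E_{n,k}$ is porous.

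Fix $n,k$ and $M\in E_{n,k}$. Let $d:=\dist(x,M)\geq 1/k$, pick $y\in P_M(x)$, and for $t\in(0,1)$ define $z_t:=y+t(x-y)\in(x,y)$. A direct application of the triangle inequality yields $\dist(z_t,M)=td$, whence $H(M,M\cup\{z_t\})=td$. Given $r\in(0,r_0)$ with $r_0:=\min(2/k,3/n)$, choose $t:=r/(2d)\in(0,1)$, set $N:=M\cup\{z_t\}$, and put $\varepsilon:=r/6=\dist(z_t,M)/3$. Then $H(M,N)=r/2<r$, and Lemma~\ref{lem:proj-close} applies to deliver $P_{N'}(x)\subset\overline B(z_t,\varepsilon)$ for every $N'\in B_H(N,\varepsilon)$. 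Hence $\diam P_{N'}(x)\leq 2\varepsilon=r/3<1/n$, so $N'\notin E_{n,k}$, and $E_{n,k}$ is porous with constants $\alpha=1/6$ and $r_0=\min(2/k,3/n)$ that are uniform in $M$.

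A countable union of porous sets is $\sigma$-porous, so the theorem follows. The substantive geometric content lies entirely in Lemma~\ref{lem:proj-close}; the task remaining is only to tune $t$ and $\varepsilon$ so that the hypothesis $\varepsilon\leq\dist(z_t,M)/3$ of that lemma holds while simultaneously keeping $2\varepsilon$ strictly below $1/n$ on a ball whose radius is a definite fraction of $r$. Stratifying by both $n$ and $k$ is precisely what makes the porosity constants uniform on each piece, which is the one point deserving of care.
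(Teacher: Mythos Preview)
Your argument is correct and follows essentially the same route as the paper: both rely on Lemma~\ref{lem:proj-close} to show that adjoining a single point $z$ on the segment $(x,y)$ yields a set $N$ whose Hausdorff neighbourhood has projections of controllably small diameter. The only cosmetic difference is that the paper stratifies by the single parameter $n$ and lets $r_0$ depend on $M$ through $\dist(x,M)$ (invoking the earlier remark that pointwise porosity constants suffice for $\sigma$-porosity), whereas you stratify additionally by $k$ to make $r_0$ and $\alpha$ uniform on each $E_{n,k}$; these are two ways of packaging the same estimate.
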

\begin{proof}
	Define 
	\[\pazocal{A}_n (x) := \left\{M\in \K(C) \ | \ \diam(P_M(x))\geq \frac{1}{n}\right\},\]
	and $ \pazocal{A}(x) = \D \bigcup_{n\geq 1}\pazocal{A}_n(x)$. It is sufficient to show that $\pazocal{A}_n(x)$ is porous for every $n\geq 1$.\\
	Fix an $n\in \N$. For $M\in \pazocal{A}_n(x)$ set 
	\[
	r_0 := \min\left\{ \frac{1}{2(n+1)}, \frac{\dist(x,M)}{3}\right\} \text{ and } \alpha = \frac{1}{3}.
	\]
	For every $0<r<r_0$ choose a $z$ as in the statement of Lemma~\ref{lem:proj-close} with $\dist(z,M)=r$. Then $z$ and $N$ as in \eqref{eq:def-set-with-point} satisfy the conditions from Lemma~\ref{lem:proj-close}. In particular it follows directly that $P_N(x)=\{z\}$, since $\|x-z\| < \|x-y\|$ for all $y\in N$ by construction of $z$, and therefore $N\in \pazocal A_n(x)$. So we have for all $N'\in B_H(N,\alpha r)$
	\[
	\diam(P_{N'}(x))\leq 2\alpha r<2 r_0< \frac{1}{n}.
	\] 
	Summing up for any $M\in \pazocal A _n(x)$ there is a set $N\in \K(C)$, which is $r$-close to $M$ and fulfils $B_H(N, \alpha r) \cap \pazocal A _n(x) = \emptyset$ with $\alpha$ as set above, hence we have that $\pazocal A_n(x)$ is porous. 
\end{proof}

\section{Successive Approximations for Set-Valued Mappings}
Throughout this chapter let $F\in \M$ be a strict contraction and $\{x^F_n\}_{n\in\N} \subset C$ be a trajectory w.r.t. $F$. We will omit the $F$ in the superscript, if it is clear which mapping is meant.

For technical reasons we always assume $x_0\not\in F(x_0)$.
The aim is to prove the following claims.
\phantomsection
\label{sec:claim1}
\paragraph{Claim 1:} For $n\in \N$, $r>0$ and a strict contraction $F\in \M$, there is a mapping $G_n \in \M$ such that the trajectory starting from $x_0$ w.r.t. $G_n$ is unique in the first $n$ steps and $\rho(G_n,F)<r$.

We will achieve this by following an inductive procedure that we summarize as the following claim.
\phantomsection
\label{sec:claim2}
\paragraph{Claim 2:} Let $F,G_0, \ldots, G_{m-1}$ be already constructed and $x_0,\ldots, x_m \in C$ with
\[
P_{G_i(x_k)}(x_k)= \{x_{k+1}\} \quad \text{ for } 0\leq k\leq i\leq m-1,
\]
and $\sigma_m>0$ small enough, where the upper bound depends on the previous steps. There is a strict contraction $G_m$ and $x_{m+1}$ with $\rho(G_{m-1},G_m)<\sigma_m$ and 
\[
P_{G_m(x_k)}(x_k) = \{x_{k+1}\} \quad \text{ for } 0 \leq k\leq m.
\]

\subsection{Some Auxiliary Lemmas}
We will state a few auxiliary lemmas before working on the claims.

\begin{lem}\label{lem:Lip-on-ball}
	Let $C$ be a convex subset of a Banach space $X$, $(Y,\rho)$ a metric space, $z\in X$ and $\delta, L_1, L_2>0$. Let $f:C\to Y$ be a continuous mapping such that for all $ x,y\in B(z,\delta)\cap C$ and all $\tilde x, \tilde y \in C\setminus B(z,\delta)$ holds \[\rho(f(x),f(y))\leq L_1\|x-y\|, \quad \rho(f(\tilde x),f(\tilde y))\leq L_2 \|\tilde x- \tilde{y}\|.\]
	Then $f$ is Lipschitz-continuous with $\Lip(f)\leq \max\{L_1,L_2\}$.
\end{lem}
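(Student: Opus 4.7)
The plan is to split the bound $\rho(f(x), f(y)) \leq \max\{L_1, L_2\}\|x - y\|$ into three cases depending on whether $x, y \in C$ are both in $B(z,\delta)$, both outside, or one of each. The first two cases are immediate from the hypotheses since $L_1, L_2 \leq \max\{L_1, L_2\}$. The mixed case is the main obstacle: the hypothesis gives no direct comparison between $f$-values at points on opposite sides of the sphere $\{w \in X : \|w - z\| = \delta\}$, so the continuity of $f$ together with convexity of $C$ will have to do the work.

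For this mixed case, suppose $x \in B(z,\delta) \cap C$ and $\tilde x \in C \setminus B(z,\delta)$. Exploiting convexity of $C$, I would parametrize the connecting segment by $w_t := (1-t)x + t\tilde x \in C$ for $t \in [0,1]$. Since $t \mapsto \|w_t - z\|$ is continuous with $\|w_0 - z\| < \delta \leq \|w_1 - z\|$, the intermediate value theorem yields a smallest $t^* \in (0,1]$ with $\|w_{t^*} - z\| = \delta$; by minimality, $w_t \in B(z,\delta) \cap C$ for all $t \in [0, t^*)$, and $w_{t^*} \in C \setminus B(z,\delta)$ because the open ball excludes its boundary sphere.

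A triangle inequality
\[
\rho(f(x), f(\tilde x)) \leq \rho(f(x), f(w_{t^*})) + \rho(f(w_{t^*}), f(\tilde x))
\]
then reduces the task to two manageable pieces. The second term is immediate from the hypothesis: since both $w_{t^*}$ and $\tilde x$ lie in $C \setminus B(z,\delta)$, it is bounded by $L_2\|w_{t^*} - \tilde x\| = L_2(1 - t^*)\|x - \tilde x\|$. The first term is the delicate one, because $w_{t^*}$ itself is not strictly inside $B(z,\delta)$ and so the $L_1$ bound does not apply directly; this is exactly where the continuity hypothesis on $f$ is needed. For each $t \in [0, t^*)$ the interior hypothesis gives $\rho(f(x), f(w_t)) \leq L_1 t \|x - \tilde x\|$, and letting $t \nearrow t^*$ yields $\rho(f(x), f(w_{t^*})) \leq L_1 t^* \|x - \tilde x\|$ by continuity of $f$. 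Adding the two bounds produces $(L_1 t^* + L_2(1 - t^*))\|x - \tilde x\| \leq \max\{L_1, L_2\}\|x - \tilde x\|$, completing the estimate in the mixed case and hence the proof.
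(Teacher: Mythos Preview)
Your proof is correct. The paper does not actually supply a proof of this lemma; it simply remarks that the statement is well known and is a particular case of the more general Lemma~2 in Ives--Preiss \cite{IvesPreiss2000}. Your argument, by contrast, is a fully self-contained elementary proof: you exploit convexity of $C$ to intersect the segment $[x,\tilde x]$ with the boundary sphere, apply the two Lipschitz hypotheses on the two subsegments, and use continuity of $f$ to handle the boundary point $w_{t^*}$ where the $L_1$-estimate is not directly available. This is exactly the natural direct approach, and it has the advantage of keeping the argument internal to the paper rather than deferring to an external reference.
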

The above Lemma is well known and is a particular case of the much more general Lemma 2 in \cite{IvesPreiss2000}.\\

We now recall bounds on the Lipschitz constant of retractions onto balls in Banach spaces by C. F. Dunkl and K. S. Williams, \cite{DunklWilliams1964}. Similar bounds have been obtained by D. G. de Figueiredo and L. A. Karlovitz in \cite{deFigueiredoKarlovitz1967}.
Even though the retraction on the closed unit ball does not need to be nonexpansive, the following mapping $R_{\varepsilon, x_0}$ is.
\begin{lem}\label{lem:retraction}
	Let $X$ be a Banach space, $x_0\in X$ and $\varepsilon>0$. Denote by ${B}_\varepsilon $ the open ball around $0$ with radius $\varepsilon$ and by $\overline{B}_\varepsilon$ the closed ball around $0$ with radius $\varepsilon$. The mapping 
	\[
	r_\varepsilon:X \to \overline{B}_\varepsilon: x\mapsto \begin{cases}
	\varepsilon \frac{x}{\|x\|}, & \text{if }x\not\in \overline B_\varepsilon\\
	x, & \text{otherwise}
	\end{cases}
	\]
	is a retraction onto $\overline{B_\varepsilon}$, i.e. $r_\varepsilon$ is continuous and $r_\varepsilon|_{\overline B_\varepsilon}=\mathrm{id}$. Moreover the mapping 
	$$ R_{\varepsilon, x_0}: X\to X: x\mapsto x-r_\varepsilon(x-x_0)$$
	is nonexpansive.
\end{lem}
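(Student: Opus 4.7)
The plan is to handle the two assertions separately: the retraction property of $r_\varepsilon$ is essentially a formula check, while the nonexpansivity of $R_{\varepsilon, x_0}$ is the substantive content.

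For the retraction property, I would observe that on $\overline B_\varepsilon$ the map $r_\varepsilon$ is the identity (hence continuous), while on $X \setminus B_\varepsilon$ it is the composition $x \mapsto \varepsilon x/\|x\|$, continuous since the denominator is bounded away from $0$. The two formulas agree on the sphere $\{\|x\|=\varepsilon\}$, so $r_\varepsilon$ is continuous on all of $X$. The identity property on $\overline B_\varepsilon$ is built into the definition.

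For the nonexpansivity of $R_{\varepsilon, x_0}$, my first step is to reduce to the case $x_0 = 0$: set $u = x - x_0$, $v = y - x_0$ and define $T(u) := u - r_\varepsilon(u)$, so that $R_{\varepsilon, x_0}(x) - R_{\varepsilon, x_0}(y) = T(u) - T(v)$ and $\|x - y\| = \|u - v\|$. Explicitly, $T$ collapses $\overline B_\varepsilon$ to $0$ and acts as $u \mapsto (1 - \varepsilon/\|u\|)u$ outside. I would then do a case analysis on which of $u, v$ lie in $\overline B_\varepsilon$. When both lie inside the bound is trivial; when exactly one lies inside, say $\|u\| \le \varepsilon < \|v\|$, the desired inequality collapses to $\|v\| - \varepsilon \le \|u - v\|$, which follows from $\|u\| \le \varepsilon$ together with the reverse triangle inequality.

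The only substantive case is $\|u\|, \|v\| > \varepsilon$. Here I would write $T(u) - T(v) = (1 - \varepsilon/\|u\|)(u - v) + \varepsilon\bigl(\|u\| - \|v\|\bigr)/(\|u\|\,\|v\|)\cdot v$ (or its symmetric variant, choosing the one with $\|u\| \ge \|v\|$), apply the triangle inequality, and then invoke $|\|u\| - \|v\|| \le \|u - v\|$ to absorb the second summand; the coefficients $(1 - \varepsilon/\|u\|)$ and $\varepsilon/\|u\|$ sum to $1$, giving exactly $\|u - v\|$ as the bound. The main conceptual obstacle — and the reason the statement is worth recording — is that in a general Banach space the radial retraction $r_\varepsilon$ is itself \emph{not} nonexpansive (this is the content of Dunkl--Williams and de Figueiredo--Karlovitz), so one should not expect $R_{\varepsilon, x_0}$ to be automatically. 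The algebraic miracle is that forming the "identity minus retraction" combination cancels the angular distortion responsible for the failure of nonexpansivity of $r_\varepsilon$, leaving only the comparison of norms handled by the reverse triangle inequality.
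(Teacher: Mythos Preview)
Your proof is correct, and the core of it---the algebraic manipulation in the case $\|u\|,\|v\|>\varepsilon$---is exactly the computation the paper carries out (compare your decomposition with the paper's add-and-subtract of $(1-\varepsilon/\|x-x_0\|)(y-x_0)$). The one methodological difference is in how the mixed case is dispatched: you treat $\|u\|\le\varepsilon<\|v\|$ directly via the reverse triangle inequality, whereas the paper avoids this case entirely by first noting that $R_{\varepsilon,x_0}$ is continuous (citing the Dunkl--Williams $2$-Lipschitz bound on $r_\varepsilon$) and then invoking the gluing lemma (Lemma~\ref{lem:Lip-on-ball}), which says a continuous map that is $L$-Lipschitz both on a ball and on its complement is $L$-Lipschitz globally. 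Your route is more self-contained; the paper's route is slightly shorter once the gluing lemma is in hand, and it illustrates a device that gets reused repeatedly later in the paper.
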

\begin{proof}
	The mapping $r_\varepsilon$ is a $2$-Lipschitz retraction by Dunkl-Williams \cite{DunklWilliams1964}. Therefore $R_{\varepsilon, x_0}$ is a continuous mapping, since $r_\varepsilon$ is Lipschitz and so in particular continuous.
	We show that $R_{\varepsilon, x_0}$ is nonexpansive by applying Lemma~\ref{lem:Lip-on-ball} to $\overline{B}(x_0,\varepsilon)$ and $X\setminus \overline{B}(x_0,\varepsilon)$.
	
	\noindent The case where $x,y\in \overline{B}(x_0,\varepsilon)$ is obvious, since here $r_\varepsilon$ is the identity and $R_{\varepsilon, x_0}$ constant with value $x_0$. Let $x,y\in X$ be any two points not in $\overline{B}(x_0,\varepsilon)$. 
	\begin{align*}
	\|R_{\varepsilon, x_0}(x)-R_{\varepsilon, x_0}(y)\| &= \left\| x-\frac{\varepsilon (x-x_0)}{\|x-x_0\|} - y + \frac{\varepsilon(y-x_0)}{\|y-x_0\|}\right\| 
	\\
	&= \left\| (x-x_0)-\frac{\varepsilon (x-x_0)}{\|x-x_0\|} - (y-x_0) + \frac{\varepsilon(y-x_0)}{\|y-x_0\|}\right\|
	\end{align*}
\begin{align*}
	\phantom{\|R_{\varepsilon, x_0}(x)-R_{\varepsilon, x_0}(y)\|} &= \left\| \left(1-\frac{\varepsilon}{\|x-x_0\|}\right)(x-x_0) -\left(1-\frac{\varepsilon}{\|x-x_0\|}\right)(y-x_0) \right. \\
	&  \qquad +\left. \left(1-\frac{\varepsilon}{\|x-x_0\|}\right)(y-x_0)- \left(1-\frac{\varepsilon}{\|y-x_0\|}\right)(y-x_0) \right\|\\
	&\leq \left(1-\frac{\varepsilon}{\|x-x_0\|}\right)\|x-y\| + \|y-x_0\|\left| \frac{\varepsilon}{\|x-x_0\|}-\frac{\varepsilon}{\|y-x_0\|}\right|\\
	&= \left(1-\frac{\varepsilon}{\|x-x_0\|}\right) \|x-y\| + \frac{\varepsilon}{\|x-x_0\|} |\|x-x_0\|-\|y-x_0\||\\
	&\leq \|x-y\|.
	\end{align*}
\end{proof}
\begin{rem}\label{rem:retraction}
	The above lemma can be interpreted in the following way: The mapping $R_{\varepsilon, x_0}$ adds a small vector to $x$ in direction of $x_0$, i.e. $R_{\varepsilon, x_0}(x) = x+\xi$ with $\xi = -r_\varepsilon(x-x_0)$ and $\|\xi\| \leq \varepsilon$. In particular for $x\in B_\varepsilon(x_0)$ the mapping is constant with value $x_0$. 
\end{rem}
\begin{rem}
	A somewhat similar way of perturbing was independently developed by M. Dymond, in \cite{dymond2021}.
\end{rem}
The following Lemma is well-known but for the convenience of the reader we include it with a proof.
\begin{lem}\label{lem:hausdorff-dist-ineq}
	Let $A_1,A_2,B_1,B_2\in \K(C)$. Then 
	\[H(A_1\cup A_2, B_1) \leq \max\{H(A_1,B_1), H(A_2,B_1)\}\] 
	and 
	\[ H(A_1\cup A_2, B_1\cup B_2) \leq \max\{H(A_1,B_1), H(A_2,B_2)\}.\]
\end{lem}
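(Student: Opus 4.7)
The plan is to work directly from the definition of the Pompeiu--Hausdorff distance, splitting it into its two directed parts. For sets $A,B\in\K(C)$ let $e(A,B):=\sup_{a\in A}\inf_{b\in B}\dd(a,b)$, so that $H(A,B)=\max\{e(A,B),e(B,A)\}$. Two elementary observations about this excess functional will do all the work: first, the supremum of a function over a union is the maximum of the suprema over the pieces, so $e(A_1\cup A_2,B)=\max\{e(A_1,B),e(A_2,B)\}$; second, enlarging the set over which an infimum is taken can only decrease it, so $\inf_{b\in B_1\cup B_2}\dd(a,b)\le \inf_{b\in B_i}\dd(a,b)$ for $i=1,2$, giving $e(A,B_1\cup B_2)\le \min\{e(A,B_1),e(A,B_2)\}$.

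For the first inequality, I would bound the two directed parts of $H(A_1\cup A_2,B_1)$ separately. The forward part equals $\max\{e(A_1,B_1),e(A_2,B_1)\}$ by the first observation and is therefore at most $\max\{H(A_1,B_1),H(A_2,B_1)\}$. The backward part $e(B_1,A_1\cup A_2)$ is at most $e(B_1,A_1)$ (or $e(B_1,A_2)$) by the second observation, hence also at most $\max\{H(A_1,B_1),H(A_2,B_1)\}$. Taking the maximum over the two directed parts gives the first claim.

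For the second inequality, the same idea is applied twice. Using the first observation to split the outer supremum over $A_1\cup A_2$ and the second observation to estimate the inner infimum by one over $B_i$ alone, one obtains $e(A_1\cup A_2,B_1\cup B_2)\le \max\{e(A_1,B_1),e(A_2,B_2)\}\le \max\{H(A_1,B_1),H(A_2,B_2)\}$. By the symmetric argument $e(B_1\cup B_2,A_1\cup A_2)\le \max\{H(A_1,B_1),H(A_2,B_2)\}$, and the second claim follows. There is no real obstacle here; the statement is purely a matter of manipulating $\sup$ and $\inf$ over unions, and the main point is simply to be careful about which of the two excess quantities picks up which inequality.
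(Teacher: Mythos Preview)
Your proof is correct. The manipulation of the directed excess $e(A,B)$ is sound, and the two observations you state do exactly the required work.

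The paper's proof is organized slightly differently: it uses the equivalent $\varepsilon$-thickening characterization of the Hausdorff distance, setting $\delta=\max\{H(A_1,B_1),H(A_2,B_2)\}$ and observing that $A_i\subset B_i^\delta$ implies $A_1\cup A_2\subset (B_1\cup B_2)^\delta$ (and symmetrically), which gives the second inequality in one line; the first inequality is then noted as the special case $B_2=B_1$. Your approach works directly with the $\sup$--$\inf$ definition and treats the two inequalities separately, which is a bit longer but equally elementary. The thickening formulation buys brevity; your excess-functional formulation makes the monotonicity properties explicit. Neither uses anything beyond the definition, so the difference is purely one of packaging.
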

\begin{proof}
	Set $\delta := \max\{H(A_1,B_1), H(A_2,B_2)\}$ and for $A\in \K(C)$ and $\varepsilon>0$ denote by $A^\varepsilon$ the set $A+B(0,\varepsilon)$.
	Then we have by definition of $\delta$ that $A_1\subset B_1^ \delta$, $A_2\subset B_2^\delta$ and consequently
	\[A_1\cup A_2 \subset B^\delta_1\cup B^\delta_2=(B_1\cup B_2)^\delta \quad \text{ and } \quad B_1\cup B_2 \subset (A_1\cup A_2)^\delta.\]
	Therefore the last inequality stated in the Lemma is true and the first one follows as a special case.
\end{proof}

\subsection{Mappings with Isolated Fixed Points and Without Fixed Points}

In this part of the section we show that for a fixed $x_0\in C$ the typical mapping in $\M$ has no fixed point in a ball around $x_0$. Moreover we show first that every mapping with a fixed point has a mapping close to it with the same fixed point being an isolated point.

\begin{lem}\label{lem:define-L}
	Let $F\in \M$, $x_0,z\in C$ and $\delta >0$ such that $F(x_0)\cap \overline B(z,\delta/2)\not=\emptyset$. Define the mapping $\mu_{x_0,\delta}:C\cap\overline{B}(x_0,\delta)\to [0,1]$ by
	\[
		\mu_{x_0,\delta}(x):= \frac{4}{3\delta} \max\left\{0,\|x-x_0\|-\frac{\delta}{4}\right\}.
	\]
	Then the mapping $L_{x_0,z,\delta}: C\to \K(C)$ defined by
	\[
	 x\mapsto \begin{cases}
		\left(1-\mu_{x_0,\delta}(x)\right)\{z\} + \mu_{x_0,\delta}(x) (F(x_0)\cap \overline B(z,\delta/2)),& x\in C\cap B(x_0,\delta),\\
		F(x_0)\cap \overline B(z,\delta/2), & x \in C\setminus B(x_0,\delta),
	\end{cases}
	\]
	is Lipschitz continuous with constant $\Lip(L_{x_0,z,\delta})=\frac{2}{3}$.
\end{lem}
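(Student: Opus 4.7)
The plan is to reformulate $L_{x_0,z,\delta}$ in a single closed form valid on all of $C$ and then bound its Hausdorff oscillation directly via the convex-combination parameter.

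First I would set $A := F(x_0) \cap \overline{B}(z,\delta/2)$, note that $A$ is non-empty and compact by assumption, and extend the weight $\mu_{x_0,\delta}$ to a function $\tilde\mu : C \to [0,1]$ by setting $\tilde\mu(x) = 1$ on $C \setminus B(x_0,\delta)$. Since $\mu_{x_0,\delta}(x) = 1$ whenever $\|x - x_0\| = \delta$, the extension is continuous, and both branches of the definition of $L_{x_0,z,\delta}$ then coincide with the single formula
\[
L_{x_0,z,\delta}(x) = (1 - \tilde\mu(x))\{z\} + \tilde\mu(x) A.
\]
The map $x \mapsto \max\{0, \|x - x_0\| - \delta/4\}$ is $1$-Lipschitz on $X$, so the prefactor $\tfrac{4}{3\delta}$ produces a Lipschitz constant $\tfrac{4}{3\delta}$ for $\mu_{x_0,\delta}$ on $\overline{B}(x_0,\delta) \cap C$; truncating by $\min\{\,\cdot\,,1\}$ preserves this constant, so $\tilde\mu$ is $\tfrac{4}{3\delta}$-Lipschitz on all of $C$.

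Second, for arbitrary $x, y \in C$ I would estimate $H(L_{x_0,z,\delta}(x), L_{x_0,z,\delta}(y))$ by pairing each $a \in A$ with itself on both sides. Setting $p_a(x) := (1 - \tilde\mu(x))z + \tilde\mu(x) a$, a direct computation gives
\[
p_a(x) - p_a(y) = \bigl(\tilde\mu(x) - \tilde\mu(y)\bigr)(a - z),
\]
and since $a \in \overline{B}(z,\delta/2)$ we obtain $\|p_a(x) - p_a(y)\| \le \tfrac{\delta}{2}\,|\tilde\mu(x) - \tilde\mu(y)|$. This bound is uniform in $a \in A$ and symmetric in $x$ and $y$, so
\[
H(L_{x_0,z,\delta}(x), L_{x_0,z,\delta}(y)) \le \frac{\delta}{2}\,|\tilde\mu(x) - \tilde\mu(y)| \le \frac{\delta}{2} \cdot \frac{4}{3\delta}\,\|x - y\| = \frac{2}{3}\,\|x - y\|.
\]

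There is no real conceptual obstacle here; the whole argument is essentially careful bookkeeping. The only point worth pausing over is the gluing of the two pieces of the original definition — either one unifies them via the extension $\tilde\mu$ as above, or one invokes Lemma~\ref{lem:Lip-on-ball} with the set $B(x_0,\delta)\cap C$ after verifying each piece separately, which then requires an extra check at the boundary $\|x - x_0\| = \delta$. I would opt for the extension approach since it sidesteps that boundary case and makes the explicit factor $\tfrac{1}{2} \cdot \tfrac{4}{3\delta} \cdot \delta = \tfrac{2}{3}$ transparent.
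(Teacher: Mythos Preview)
Your proof is correct and follows essentially the same line as the paper's: both reduce the Hausdorff estimate to $|\mu(x)-\mu(y)|\cdot H(\{z\},A)\le \tfrac{4}{3\delta}\|x-y\|\cdot\tfrac{\delta}{2}$. The only presentational difference is that the paper cites an external identity for $H(t\{z\}+(1-t)A,\,s\{z\}+(1-s)A)=|s-t|\,H(\{z\},A)$ and then invokes Lemma~\ref{lem:Lip-on-ball} to glue the two regions, whereas your global extension $\tilde\mu=\min\{1,\mu\}$ and the direct pairing $a\mapsto p_a$ sidestep both of those steps, giving a slightly more self-contained argument.
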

\begin{proof}
	Note that $(F(x_0)\cap \overline{B}(z,\delta/2))$ and $\{z\}$ are compact sets, and therefore their sum is compact too. Hence $L_{x_0,z,\delta}$ is well defined. In \cite[p.119, 120]{BargetzDymondReich2017} the authors show for $t,s\in [0,1]$ and $A\in \K(C)$ with $A\not=\{z\}$ that
	\begin{align}\label{eq:Hausdorff-sets-contraction}
		H(t\{z\} + (1-t)A,s\{z\} + (1-s)A) = |s-t|H(\{z\},A)
	\end{align}
	holds. Observe that $\mu_{x_0,\delta}$ is a continuous function that is constant on $C\cap B(x_0,\delta/4)$ and has Lipschitz constant $\frac{4}{3\delta}$ on $C\cap B(x_0,\delta)\setminus B(x_0,\delta/4)$ and therefore is Lipschitz with $\Lip(\mu_{x_0,\delta})=\frac{4}{3\delta}$ by Lemma~\ref{lem:Lip-on-ball}. In \eqref{eq:Hausdorff-sets-contraction} plugging in $x,y\in \overline{B}(x_0,\delta)$, $t= \mu_{x_0,\delta}(x)$, \linebreak$ s=\mu_{x_0,\delta}(y)$ and $A=F(x_0)\cap \overline{B}(z,\delta/2)$ yields
	\begin{align*}
		H(L_{x_0,z,\delta}(x),L_{x_0,z,\delta}(y)) &= |\mu_{x_0,\delta}(x)-\mu_{x_0,\delta}(y)|H(\{z\},F(x_0)\cap \overline{B}(z,\delta/2))\\ 
		&\leq \Lip(\mu_{x_0,\delta})\|x-y\|\frac{\delta}{2}\\
		&\leq \frac{2}{3}\|x-y\|.
	\end{align*}
	It is easy to see that $\mu_{x_0,\delta}(x)=1$ for $x$ with $\|x-x_0\|=\delta$, and therefore $L_{x_0,z,\delta}(x)= F(x_0)\cap\overline{B}(z,\delta/2)$. So the mapping $L_{x_0,z,\delta}$ is continuous, constant on $C\setminus B(x_0,\delta)$ and $C\cap B(x_0,\delta/4)$, has Lipschitz constant $2/3$ on $C\cap B(x_0,\delta)\setminus B(x_0,\delta/4)$. Therefore by applying Lemma~\ref{lem:Lip-on-ball} twice the proof is concluded.
\end{proof}
\begin{lem}\label{lem:make-fixed-point-isolated}
	Let $F\in \M$ and $x_0\in C$ be such that $x_0\in F(x_0)$. Then for every \linebreak $\varepsilon\in (0,\diam(C))$ and for $0<r \leq \varepsilon/24$ there is a mapping $G\in B_\rho(F,\varepsilon)$ such that $\Lip(G)\leq \max\{2/3 , \Lip(F)\}$, $x_0\in G(x_0)$ and for all $v\in B(x_0,r)$ we have $P_{G(v)}(v)=\{x_0\}$ and $G(v)\cap B(x_0,r)=\{x_0\}$.
\end{lem}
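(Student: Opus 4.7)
The plan is to modify $F$ inside a small ball around $x_0$ so that $x_0$ is inserted as an isolated point of every image $G(v)$ with $v\in B(x_0,r)$, while keeping $G$ close to $F$ in $\rho$ and preserving the Lipschitz constant. The essential tools are Lemma~\ref{lem:define-L} (a Lipschitz interpolation with image of controlled diameter) and Lemma~\ref{lem:hausdorff-dist-ineq} (Hausdorff distance behaviour under unions).

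I would split into two cases depending on the structure of $F(x_0)$. In \emph{Case A}, $F(x_0)$ contains some $z$ with $\|z-x_0\|>2r$; then by choosing $\delta$ of order $r$ with $\delta/2+r<\|z-x_0\|$ the ball $\overline B(z,\delta/2)$ is disjoint from $B(x_0,r)$, and Lemma~\ref{lem:define-L} produces $\Lambda:=L_{x_0,z,\delta}$ with $\Lambda(v)\subset\overline B(z,\delta/2)$ for every $v$ and $\Lambda(x_0)=\{z\}$. In \emph{Case B}, $F(x_0)\subset \overline B(x_0,2r)$; then nonexpansivity forces $F(v)\subset \overline B(x_0,3r)$ for every $v\in B(x_0,r)$, and the analogous construction below with $\Lambda$ replaced by a constant set $\{z'\}$ for a chosen $z'\notin B(x_0,r)$ handles this regime at a cost of $\rho$-error $\sim 3r\le \varepsilon/8$.

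The construction in Case A is
\[
G(v):=\{x_0\}\cup\Lambda(v)\cup\tilde F(v),
\]
where $\tilde F$ coincides with $F$ outside $B(x_0,\delta)$ and is pulled toward $\Lambda(v)$ inside, using the cutoff $\mu_{x_0,\delta}$ of Lemma~\ref{lem:define-L}, arranged so that $\tilde F(v)\subset \overline B(z,\delta/2)$ whenever $\mu_{x_0,\delta}(v)=0$, in particular for $v\in B(x_0,r)\subset B(x_0,\delta/4)$. Lemma~\ref{lem:hausdorff-dist-ineq} then yields $\Lip(G)\le\max(\Lip(\Lambda),\Lip(\tilde F))$, which is made at most $\Lip(F)$ by shrinking the image radius if $\Lip(F)<2/3$. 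For $v\in B(x_0,r)$ one has $G(v)\subset \{x_0\}\cup\overline B(z,\delta/2)$, so $G(v)\cap B(x_0,r)=\{x_0\}$; the projection identity $P_{G(v)}(v)=\{x_0\}$ follows from the triangle inequality using $\|v-x_0\|<r$ and the gap $\|z-x_0\|-\delta/2>r$.

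The main obstacle is achieving $\rho(G,F)<\varepsilon$ and $\Lip(G)\le\Lip(F)$ simultaneously. A naive convex-combination interpolation $(1-\mu(v))\{z\}+\mu(v)F(v)$ contributes a Lipschitz term $\Lip(\mu)\cdot \diam(\{z\}\cup F(v))\sim \diam(C)/\delta$, which can be arbitrarily large; the point of Lemma~\ref{lem:define-L} is precisely that restricting to targets inside $\overline B(z,\delta/2)$ replaces $\diam(C)$ by $\delta$ and yields the absolute Lipschitz bound $2/3$. The numerical factor $24$ in $r\le\varepsilon/24$ should emerge from the choice $\delta\sim 6r$ (so that $B(x_0,\delta/4)\supset B(x_0,r)$ and $\overline B(z,\delta/2)\cap B(x_0,r)=\emptyset$), combined with the factors $2,3,4$ appearing in the cutoff $\mu_{x_0,\delta}$ and the triangle inequality on the transition annulus $B(x_0,\delta)\setminus B(x_0,\delta/4)$, where each point of $G(v)$ is compared to a nearby point of $F(v)$ using $z\in F(x_0)$ and the nonexpansivity of $F$.
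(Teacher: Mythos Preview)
Your plan has a genuine gap in Case~A: the construction does \emph{not} keep $G$ close to $F$ in $\rho$. You require $\tilde F(v)\subset\overline B(z,\delta/2)$ for $v\in B(x_0,r)$, so that $G(x_0)=\{x_0\}\cup\{z\}\cup\tilde F(x_0)\subset\{x_0\}\cup\overline B(z,\delta/2)$. But $F(x_0)$ need not be contained in any small ball; it can contain a point $w$ with $\|w-x_0\|$ and $\dist(w,\overline B(z,\delta/2))$ both of order $\diam(C)$. For such $w$ one has $\dist(w,G(x_0))\gtrsim\diam(C)$, hence $H(F(x_0),G(x_0))\gtrsim\diam(C)$, and $\rho(F,G)<\varepsilon$ is impossible. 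The description ``$\tilde F$ coincides with $F$ outside $B(x_0,\delta)$ and is pulled toward $\Lambda(v)$ inside'' hides this: you are collapsing an arbitrarily large compact set into a ball of radius $\delta/2\sim r$, which is a large perturbation however smoothly you interpolate in $v$. The same issue makes the Lipschitz estimate for $\tilde F$ unverifiable as stated.

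The paper avoids this by taking $z=x_0$ (not a far point) in Lemma~\ref{lem:define-L} and by \emph{keeping the bulk of $F(x_0)$}: one freezes $F$ near $x_0$ via the nonexpansive retraction $R_{\varepsilon',x_0}$ of Lemma~\ref{lem:retraction} (so that $F\circ R_{\varepsilon',x_0}$ is constantly $F(x_0)$ on $B(x_0,\varepsilon')$ and differs from $F$ by at most $\varepsilon'$ everywhere), then sets
\[
G(v)=\bigl(F(x_0)\setminus B(x_0,\delta/2)\bigr)\cup L_{x_0,x_0,\delta}(v)\quad\text{for }v\in B(x_0,\varepsilon').
\]
Only the portion of $F(x_0)$ inside $B(x_0,\delta/2)$ is modified, and it is replaced by the interpolant $L_{x_0,x_0,\delta}(v)$, which equals $\{x_0\}$ on $B(x_0,\delta/4)$ and expands back to $F(x_0)\cap\overline B(x_0,\delta/2)$ at $\|v-x_0\|=\delta$. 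This is why $H(G(v),F(v))\le\delta+\varepsilon'<\varepsilon$ with $\delta=\varepsilon/4$, $\varepsilon'=\varepsilon/2$, and why no case distinction on the size of $F(x_0)$ is needed. With $r=\delta/6=\varepsilon/24$ one then gets $G(v)=G(x_0)$, $G(v)\cap B(x_0,r)=\{x_0\}$ and $P_{G(v)}(v)=\{x_0\}$ for all $v\in B(x_0,r)$ directly, since every other point of $G(v)$ lies at distance at least $\delta/2>3r$ from $x_0$.
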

\begin{proof}
	Set $\varepsilon'=\varepsilon/2, \delta = \varepsilon/4$ and define the mapping $G:C\to \K(C)$ by
	\[
	G(x):= \begin{cases}
		F(R_{\varepsilon'\negthickspace,x_0}(x)),& x\in C\setminus B(x_0,\varepsilon'),\\
		(F(x_0)\setminus {B}(x_0,\delta/2))\cup L_{x_0,x_0,\delta}(x), & \text{otherwise}
	\end{cases} ,
	\]
	where $L_{x_0,x_0,\delta}$ is as in Lemma~\ref{lem:define-L}. We use that $F(R_{\varepsilon'\negthickspace,x_0}(x)) = F(x_0)$ for $x\in B(x_0,\varepsilon')\cap C$.	We will show that $\Lip(G)\leq\max\{2/3, \Lip(F)\}$. Let first $x,y\in B(x_0,\varepsilon')\cap C$. Then we have that
	\begin{align}\label{eq:Lip-GL} \nonumber
		H(G(x),G(y)) %&= H((F(R_{\varepsilon'\negthickspace,x_0}(x))\setminus {B}(x_0,\delta/2))\cup L_{x_0,x_0,\delta}(x),(F(R_{\varepsilon'\negthickspace,x_0}(y))\setminus {B}(x_0,\delta/2))\cup L_{x_0,x_0,\delta}(y))\\
		&= H((F(x_0)\setminus {B}(x_0,\delta/2)) \cup L_{x_0,x_0,\delta}(x), (F(x_0)\setminus {B}(x_0,\delta/2)) \cup L_{x_0,x_0,\delta}(y))\\ 
		&\leq H(L_{x_0,x_0,\delta}(x),L_{x_0,x_0,\delta}(y))\\ \nonumber
		&\leq \frac{2}{3} \|x-y\|,
	\end{align}
	where we applied Lemma~\ref{lem:hausdorff-dist-ineq} in the penultimate inequality and Lemma~\ref{lem:define-L} in the last inequality. Further for $x,y\in C\setminus B(x,\varepsilon')$ we have
	\begin{align}\label{eq:Lip-GF-2}
		H(G(x),G(y))=H(F(R_{\varepsilon'\negthickspace,x_0}(x)),F(R_{\varepsilon'\negthickspace,x_0}(y))) \leq \Lip(F) \|x-y\|,
	\end{align}
	since $\Lip(R_\varepsilon'\negthickspace,x_0)=1$ by Lemma~\ref{lem:retraction}.\\
	Note that for $x$ with $\|x-x_0\| = \varepsilon'$ we have $	L_{x_0,x_0,\delta}(x) = F(x_0)\cap \overline{B}(x_0,\delta/2) $	and
	\[
	F(x_0)=(F(x_0)\setminus B(x_0,\delta/2)) \cup F(x_0)\cap \overline{B}(x_0,\delta/2) = F(R_{\varepsilon'\negthickspace,x_0}(x)),
	\]
	hence $G$ is continuous and by the above observations together with Lemma~\ref{lem:Lip-on-ball} we have \linebreak $\Lip(G)\leq\max\{2/3, \Lip(F)\}$. Now set $r = \delta/6$ and observe that for all $v\in B(x_0,r)$ we have
	\[
		G(v) = G(x_0) \quad \text{and} \quad P_{G(v)}(v) = \{x_0\},
	\]
	since $L_{x_0,x_0,\delta}$ is constant on $B(x_0,r)$ by definition and the second part follows by a straight forward calculation. It remains to show that $\rho(F,G)<\varepsilon$. Observe that for $x\in C\setminus B(x_0,\varepsilon')$ we have
	\begin{align}\label{eq:dist-GL-F-1}
		H(G(x),F(x)) &= H(F(R_{\varepsilon'\negthickspace,x_0}(x)),F(x)) \\ \nonumber &\leq \Lip(F)\|x-r_\varepsilon'(x-x_0)-x\| \\ \nonumber &\leq \Lip(F)\varepsilon' \leq \varepsilon/2.
	\end{align}
	For $x\in B(x_0,\varepsilon')\cap C$ we have that
	\begin{align} \label{eq:dist-GL-F-2}
		H(G(x),F(x)) &= H((F(x_0)\setminus {B}(x_0,\delta/2))\cup L_{x_0,x_0,\delta}(x),F(x)) \\ \nonumber
		&\leq H((F(x_0)\setminus{B}(x_0,\delta/2)) \cup L_{x_0,x_0,\delta}(x),F(x_0))+ H(F(x_0),F(x)) \\ \nonumber
		&< \max\left\{H(F(x_0)\setminus B(x_0,\delta/2), F(x_0)\setminus B(x_0,\delta/2)),\right. \\ \nonumber &\hspace{5cm} \left. H(L_{x_0,x_0,\delta}(x), F(x_0)\cap \overline{B}(x_0,\delta/2))\right\} + \varepsilon'\\ \nonumber
		&\leq \delta + \varepsilon' < \varepsilon,
	\end{align}
	where we used the triangle inequality, split up the set $F(x_0)$ in 
	\[
		F(x_0) = (F(x_0)\setminus {B}(x_0,\delta/2)) \cup 	(F(x_0)\cap 	\overline{B}(x_0,\delta/2)),
	\]
	a union of two sets, in order to apply Lemma~\ref{lem:hausdorff-dist-ineq} and then used that \linebreak $L_{x_0,x_0,\delta}(x)\subset \overline{B}(x_0,\delta/2)$, therefore $H(L_{x_0,x_0,\delta}(x),\overline{B}(x_0,\delta/2))\leq \delta$, in the penultimate inequality.
\end{proof}
\begin{prop}\label{prop:TypicalFixedPoint}
	Let $x_0\in C, F\in \M$ and $\diam(C)>\varepsilon >0$. Then there are a mapping $G\in \M$ and a radius $r>0$ such that $\rho(F,G)<\varepsilon$ and for all $v\in B(x_0,r)$ we have $v\not \in G(v)$.
\end{prop}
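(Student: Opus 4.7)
The strategy splits on whether $x_0$ is a fixed point of $F$. In the harder case I apply Lemma~\ref{lem:make-fixed-point-isolated} to first make $x_0$ an isolated point of $\tilde G(x_0)$, then nudge that isolated singleton a tiny distance along an inward direction of $C$ to remove the fixed point.

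If $x_0 \notin F(x_0)$, then $d := \dist(x_0, F(x_0)) > 0$ (since $F(x_0)$ is compact), and Lemma~\ref{lem:hausdorff} together with the nonexpansiveness of $F$ yields
\[
\dist(v, F(v)) \geq \dist(x_0, F(x_0)) - \|v - x_0\| - H(F(x_0), F(v)) \geq d - 2\|v - x_0\|
\]
for every $v \in C$. Consequently $G := F$ and $r := d/4$ already satisfy the claim.

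Now assume $x_0 \in F(x_0)$. I apply Lemma~\ref{lem:make-fixed-point-isolated} with parameter $\varepsilon/2$ and a sufficiently small radius $r_0 \leq \varepsilon/48$ to obtain $\tilde G \in \M$ with $\rho(F, \tilde G) < \varepsilon/2$. Inspecting the construction, $\tilde G$ is in fact constant on $B(x_0, r_0)$ with value $A := \tilde G(x_0)$ satisfying $A \cap B(x_0, r_0) = \{x_0\}$. Pick any $y \in C$ with $\|y - x_0\| \geq \eta := r_0/4$ (possible since $\diam(C) > \varepsilon > \eta$) and let $e := (y - x_0)/\|y - x_0\|$, so that $x_0 + \eta e \in C$ by convexity. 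Set $r_1 := r_0/8$, $r_2 := r_0/2$, and let $\mu: C \to [0,1]$ be the cutoff equal to $0$ on $B(x_0, r_1)$, equal to $1$ on $C \setminus B(x_0, r_2)$, and linear in $\|\cdot - x_0\|$ on the annulus. I define
\[
G(x) := \begin{cases}(A \setminus \{x_0\}) \cup \{x_0 + (1 - \mu(x))\eta e\}, & \|x - x_0\| \leq r_2,\\ \tilde G(x), & \|x - x_0\| \geq r_2.\end{cases}
\]
The two branches agree on the sphere $\|x - x_0\| = r_2$ (both reduce to $A$ there), and on the inner ball only the singleton component varies, so $H(G(x), G(y)) \leq \eta|\mu(x) - \mu(y)| \leq (\eta/(r_2 - r_1))\|x - y\| = (2/3)\|x - y\|$. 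On the outer piece $G = \tilde G$ is already nonexpansive, and Lemma~\ref{lem:Lip-on-ball} combines the two regions into $G \in \M$. For $v \in B(x_0, r_1)$ we have $G(v) = (A \setminus \{x_0\}) \cup \{x_0 + \eta e\}$, and both $\dist(x_0, A \setminus \{x_0\}) \geq r_0 > r_1$ and $\eta > r_1$ rule out $v \in G(v)$. Finally $\rho(G, \tilde G) \leq \eta < \varepsilon/2$ gives $\rho(G, F) < \varepsilon$, and taking $r := r_1$ finishes the argument.

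The main obstacle is calibrating the three length scales simultaneously: the displacement $\eta$ must be small relative to the transition width $r_2 - r_1$ (to preserve nonexpansiveness of $G$) and small relative to $\varepsilon$ (to stay inside the ball $B_\rho(F, \varepsilon)$), yet strictly larger than the inner radius $r_1$ (so that the shifted singleton $x_0 + \eta e$ escapes the ball on which fixed points must be excluded).
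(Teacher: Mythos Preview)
Your argument is correct. The easy case $x_0\notin F(x_0)$ matches the paper essentially verbatim (you take $r=d/4$ where the paper takes $d/3$). For the case $x_0\in F(x_0)$ you take a genuinely different route: you first invoke Lemma~\ref{lem:make-fixed-point-isolated} to obtain $\tilde G$ that is constant on $B(x_0,r_0)$ with $x_0$ an isolated point of $A=\tilde G(x_0)$, and then slide that single isolated point a distance $\eta$ along an inward direction of $C$, interpolating back to $\tilde G$ via a radial cutoff $\mu$; your calibration $\eta=r_0/4$, $r_1=r_0/8$, $r_2=r_0/2$ indeed gives Lipschitz constant $2/3$ on the inner region and keeps the shifted singleton outside $B(x_0,r_1)$. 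The paper instead performs the surgery in a single step: it freezes $F$ on $B(x_0,\varepsilon')$ via $R_{\varepsilon'\!,x_0}$ and replaces $F(x_0)\cap B(z,\delta/2)$ by $L_{x_0,z,\delta}(x)$ for a point $z\neq x_0$ with $\|z-x_0\|=\delta/4$, so that $\dist(x_0,G(x_0))\geq\delta/4$ comes out directly. Your approach is more modular---it reuses Lemma~\ref{lem:make-fixed-point-isolated} as a building block rather than repeating its construction with an off-centre base point---at the price of relying on a property (the constancy of $\tilde G$ on $B(x_0,r_0)$) that is established only in the \emph{proof} of that lemma, not in its statement; since you flag this explicitly there is no logical gap, but in a self-contained write-up you might prefer to strengthen the statement of Lemma~\ref{lem:make-fixed-point-isolated} to record this constancy.
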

\begin{proof}
	First we investigate the case where $x_0\not\in F(x_0)$. Since $F(x_0)$ is a compact set we have a positive distance $s=\dist(x_0,F(x_0))>0$. Observe now that for all $v\in B(x_0,s/3)$ we have
	\begin{align}
		\nonumber s=\dist(x_0,F(x_0))&\leq \|x_0-v\| + \dist(v,F(x_0)) \\ \label{eq:dist-fixed-point}
		& \leq \frac{s}{3} + \dist(v,F(v)) + H(F(v),F(x_0)) \leq \frac{2}{3}s + \dist(v,F(v))
	\end{align}
	and hence the inequality $\dist(v,F(v))\geq s/3$ holds. So one can take $F=G$ and $r=s/3$.
	
	\noindent Now assume $x_0\in F(x_0)$. Set $\varepsilon' = \varepsilon/2$ and $\delta=\varepsilon/4$. We use the mapping $R_{\varepsilon'\negthickspace,x_0}(x) = x- r_\varepsilon'(x-x_0)$ from Lemma~\ref{lem:retraction}. Choose an arbitrary $z\in C$ with $\|z-x_0\|=\delta/4$.
	Define the mapping
	\[
	G(x):= \begin{cases}
		F(R_{\varepsilon'\negthickspace,x_0}(x)),& x\in C\setminus B(x_0,\varepsilon'),\\
		(F(R_{\varepsilon'\negthickspace,x_0}(x))\setminus {B}(z,\delta/2))\cup L_{x_0,z,\delta}(x), & \text{otherwise}
	\end{cases} .
	\]	
	Note that for $x\in \partial B(x_0,\varepsilon')$ we have $G(x) = F(R_{\varepsilon'\negthickspace,x_0}(x))=F(x_0)$ and therefore this mapping is continuous.
	By construction we have that $\dist(x_0,G(x_0))\geq\delta/4$ and therefore we can choose $r:=\delta/12$ to guarantee that no $v\in B(x_0,r)$ is a fixed point of $G$ by the same argument as in \eqref{eq:dist-fixed-point}. We still have to show that $\Lip(G)\leq 1$ and $\rho(F,G)<\varepsilon$ holds. The first of the claims follows by the same reasoning as in \eqref{eq:Lip-GL} and \eqref{eq:Lip-GF-2}. For the latter one we reason as in \eqref{eq:dist-GL-F-1} and \eqref{eq:dist-GL-F-2} again to obtain $\rho(F,G)<\varepsilon$. 
\end{proof}
\begin{coro}\label{coro:A0-nowheredense}
	Let $x_0\in C$ and define the set of mappings in $\M$ that do not have $x_0$ as fixed point as
	\[
	\pazocal A_0(x_0) := \left\{ F\in \M ~\middle|~ x_0\not\in F(x_0) \right\}.
	\]
	Then the set $\M\setminus \pazocal A_0(x_0)$ is nowhere dense.
\end{coro}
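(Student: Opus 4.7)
The plan is to show that $\pazocal{A}_0(x_0)$ is both open and dense in $\M$; this immediately makes its complement closed with empty interior, hence nowhere dense.

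For density, I would invoke Proposition~\ref{prop:TypicalFixedPoint} directly. Given any $F \in \M$ and $\varepsilon \in (0,\diam(C))$, that proposition produces $G \in \M$ with $\rho(F,G) < \varepsilon$ and some $r > 0$ such that $v \notin G(v)$ for every $v \in B(x_0, r)$. In particular $x_0 \notin G(x_0)$, so $G \in \pazocal{A}_0(x_0)$, and density is established.

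For openness, I would use the compactness of point images together with Lemma~\ref{lem:hausdorff}. If $F \in \pazocal{A}_0(x_0)$, then since $F(x_0)$ is compact and $x_0 \notin F(x_0)$, the number $s := \dist(x_0, F(x_0)) > 0$. For any $G \in \M$ with $\rho(F,G) < s/2$, Lemma~\ref{lem:hausdorff} applied to $x_0$ with the sets $G(x_0)$ and $F(x_0)$ yields
\[
\dist(x_0, G(x_0)) \geq \dist(x_0, F(x_0)) - H(F(x_0), G(x_0)) \geq s - \rho(F,G) > s/2 > 0,
\]
so $x_0 \notin G(x_0)$, i.e.\ $G \in \pazocal{A}_0(x_0)$. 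Thus $B_\rho(F, s/2) \subset \pazocal{A}_0(x_0)$, proving openness.

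There is no real obstacle here: the hard work was already done in Proposition~\ref{prop:TypicalFixedPoint}, which handled both cases $x_0 \in F(x_0)$ and $x_0 \notin F(x_0)$ via the retraction $R_{\varepsilon',x_0}$ and the auxiliary mapping $L_{x_0,z,\delta}$. The corollary merely packages that density statement with the elementary openness observation into the topological conclusion that $\M \setminus \pazocal{A}_0(x_0)$ is nowhere dense.
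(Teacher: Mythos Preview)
Your proof is correct and follows essentially the same approach as the paper: density from Proposition~\ref{prop:TypicalFixedPoint}, openness from the positive distance $s=\dist(x_0,F(x_0))$ together with the definition of $\rho$. The only cosmetic differences are that the paper uses the radius $s/3$ rather than your $s/2$ and leaves the distance inequality implicit instead of citing Lemma~\ref{lem:hausdorff}.
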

\begin{proof}
	In view of Proposition~\ref{prop:TypicalFixedPoint} we are left to show that there is a ball around $F\in \pazocal A_0(x_0)$ fully contained in $\pazocal A_0(x_0)$. We set $s=\dist(x_0,F(x_0))$ and observe that every $G\in B_\rho(F,s/3)$ has to satisfy $H(G(x_0),F(x_0))<s/3$. Therefore $x_0\not \in G(x_0)$.
\end{proof}

\subsection{The Inductive Construction} \label{sec:inductive-constr}

Given a strict contraction we will construct a new strict contraction, such that the new mapping has a unique projection from a given point and is still close to the mapping we started from. In the following we fix $n\in \N$ and $0<r<1$.

\begin{lem} \label{lem:lambda} 
	For $x_0\in C$ and $\delta>0$ define 
	\[\lambda_{x_0,\delta}: C \to [0,1]: x\mapsto \frac{\max\{\tfrac{\delta}{2}-\|x-x_0\|,0\}}{\delta / 2} .\]
	Then $\Lip(\lambda_{x_0,\delta}) \leq \frac{2}{\delta}$ and $\lambda_{x_0,\delta}(x)=0$ for $x\in C\setminus B(x_0,\delta/2)$.
\end{lem}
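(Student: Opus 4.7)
The statement is essentially a routine computation, so the plan is to split $C$ into the two natural regions $B(x_0,\delta/2)\cap C$ and $C\setminus B(x_0,\delta/2)$ and verify Lipschitz bounds on each piece, then glue them via Lemma~\ref{lem:Lip-on-ball}.

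First I would observe that whenever $\|x-x_0\|\geq \delta/2$ we have $\delta/2-\|x-x_0\|\leq 0$, so the maximum in the definition equals $0$ and hence $\lambda_{x_0,\delta}(x)=0$. This settles the second assertion and simultaneously shows that $\lambda_{x_0,\delta}$ is constant (hence $0$-Lipschitz) on $C\setminus B(x_0,\delta/2)$.

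Next, on $B(x_0,\delta/2)\cap C$ the expression simplifies to
\[
\lambda_{x_0,\delta}(x)=\frac{\delta/2-\|x-x_0\|}{\delta/2}=1-\frac{2}{\delta}\|x-x_0\|.
\]
For two points $x,y$ in this region, the reverse triangle inequality yields
\[
|\lambda_{x_0,\delta}(x)-\lambda_{x_0,\delta}(y)|=\frac{2}{\delta}\bigl|\,\|x-x_0\|-\|y-x_0\|\,\bigr|\leq \frac{2}{\delta}\|x-y\|,
\]
so $\lambda_{x_0,\delta}$ has Lipschitz constant at most $2/\delta$ on $B(x_0,\delta/2)\cap C$.

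Finally, since $\lambda_{x_0,\delta}$ is continuous (for instance, because the map $x\mapsto \max\{\delta/2-\|x-x_0\|,0\}$ is continuous as a maximum of continuous functions), Lemma~\ref{lem:Lip-on-ball} applied with $z=x_0$, $L_1=2/\delta$, $L_2=0$ gives $\Lip(\lambda_{x_0,\delta})\leq \max\{2/\delta,0\}=2/\delta$, which is the remaining claim. There is no genuine obstacle here; the only thing to be slightly careful about is checking continuity across the boundary sphere $\|x-x_0\|=\delta/2$, which is immediate since both expressions agree there with the value $0$.
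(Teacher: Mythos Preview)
Your proof is correct and follows essentially the same approach as the paper: split into the ball $B(x_0,\delta/2)\cap C$ and its complement, compute the Lipschitz constants on each piece via the reverse triangle inequality, note continuity, and glue using Lemma~\ref{lem:Lip-on-ball}. The paper's proof is slightly terser but structurally identical.
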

\begin{proof}
	For $x,y\in C\setminus {B}(x_0,\delta/2)$ we have $|\lambda_{x_0,\delta}(x)-\lambda_{x_0,\delta}(y)|=0$. For $x,y\in \overline{B}(x_0,\delta/2)\cap C$ we have that \[|\lambda_{x_0,\delta}(x)-\lambda_{x_0,\delta}(y)|=\frac{|\delta/2-\|x-x_0\| - \delta/2 +\|y-x_0\||}{\delta/2} \leq \frac{2}{\delta} \|x-y\|.\]
	The mapping $\lambda_{x_0,\delta}$ is continuous as a composition of continuous mappings and the claim follows by Lemma~\ref{lem:Lip-on-ball}.
\end{proof}

\begin{prop}\label{prop:construction-g}
	Let $G\in \M$, $\Lip(G)<1$, $z\in C$ with $z\not\in G(z)$, $\delta := \dist(z,G(z))$, $\sigma\in (0,\delta/2)$ and $\varepsilon':=(1-\Lip(G))\sigma/4$. Then there exists a mapping $\tilde{G}\in \M$ and a point $\tilde{z}\in C\setminus G(z)$ that satisfy:
	\begin{multicols}{2}
		\begin{enumerate}[(i)]
			\item $\tilde G(z) = G(z)\cup \{\tilde z\}$ \label{prop:constr:form}
			\item $P_{\tilde G(z)}(z) = \{\tilde z\}$\label{prop:constr:proj}
			\item $\tilde{z}\not \in \overline{B}(z,\sigma)$\label{prop:constr:sigma}
			\item $\Lip(\tilde G) \leq \Lip(G)/3 + 2/3 < 1$ \label{prop:constr:Lip}
			\item $\rho(G, \tilde G) < \sigma$ \label{prop:constr:distance}
			\item for all $ \tilde x \in B(z,\frac{\varepsilon'}{3}) : P_{\tilde G(z)}(\tilde x) = \{\tilde z\}$.\label{prop:constr:points-close}
		\end{enumerate} 
	\end{multicols}
\end{prop}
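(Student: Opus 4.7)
My plan is to place $\tilde z$ on the segment from $z$ to a nearest point $y\in P_{G(z)}(z)$, and then to build $\tilde G$ by combining $G\circ R_{\delta_1,z}$ (from Lemma~\ref{lem:retraction}) outside a ball around $z$ with a local modification inside the ball built from $L_{z,\tilde z,\delta_1}$ (from Lemma~\ref{lem:define-L}). Concretely, I would set $\eta:=\varepsilon'=(1-\Lip(G))\sigma/4$, pick $y\in P_{G(z)}(z)$ (nonempty since $G(z)$ is compact, so $\|z-y\|=\delta$), and define $\tilde z:=y+(\eta/\delta)(z-y)\in[z,y]$, satisfying $\|\tilde z-y\|=\eta$ and $\|z-\tilde z\|=\delta-\eta$. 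Because $\sigma<\delta/2$ and $\eta\leq\sigma/4$, property (iii) follows from $\|z-\tilde z\|=\delta-\eta>\sigma$, and (ii) from $\|z-\tilde z\|<\delta\leq\|z-g\|$ for every $g\in G(z)$. Moreover $\tilde z\in C$ by convexity, and $\tilde z\notin G(z)$ since $\|z-\tilde z\|<\delta$.

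Setting $\delta_1:=2\eta$, the condition $y\in G(z)\cap\overline B(\tilde z,\delta_1/2)$ allows the application of Lemma~\ref{lem:define-L}, yielding $M(x):=L_{z,\tilde z,\delta_1}(x)$ with $\Lip(M)\leq 2/3$ and $M(z)=\{\tilde z\}$. In the spirit of Lemma~\ref{lem:make-fixed-point-isolated}, I would then define
\[
\tilde G(x) := \begin{cases} G(R_{\delta_1,z}(x)), & x\in C\setminus B(z,\delta_1), \\ (G(z)\setminus B(\tilde z,\eta))\cup M(x), & x\in\overline B(z,\delta_1)\cap C. \end{cases}
\]
Continuity on $\partial B(z,\delta_1)$ is straightforward because both branches simplify to $G(z)$: on the outer branch $R_{\delta_1,z}(x)=z$; on the inner branch $\mu_{z,\delta_1}(x)=1$ gives $M(x)=G(z)\cap\overline B(\tilde z,\eta)$, whose union with $G(z)\setminus B(\tilde z,\eta)$ equals $G(z)$. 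At $x=z$ the equality $\dist(\tilde z,G(z))=\eta$ forces $G(z)\cap B(\tilde z,\eta)=\emptyset$, so $\tilde G(z)=G(z)\cup\{\tilde z\}$, which is (i).

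For (iv), the outer branch is $\Lip(G)$-Lipschitz as a composition of nonexpansive maps, and the inner branch is $2/3$-Lipschitz by Lemma~\ref{lem:hausdorff-dist-ineq} combined with Lemma~\ref{lem:define-L}; Lemma~\ref{lem:Lip-on-ball} then yields $\Lip(\tilde G)\leq\max\{\Lip(G),2/3\}\leq\Lip(G)/3+2/3<1$. For (v), I would estimate $H(G(x),G(R_{\delta_1,z}(x)))\leq 2\Lip(G)\eta$ outside and, using $M(x)\subset\overline B(\tilde z,\eta)$ and the triangle inequality,
\[
H(G(x),\tilde G(x))\leq H(G(x),G(z))+\sup_{p\in M(x)}\dist(p,G(z))\leq 2\Lip(G)\eta+\eta=(2\Lip(G)+1)\eta
\]
inside, both strictly less than $\sigma$ for every $\Lip(G)\in(0,1)$ thanks to the choice $\eta=(1-\Lip(G))\sigma/4$. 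I expect (v) to be the main technical point, since it forces the two error sources---the retraction outside and the added point inside---to fit simultaneously into the budget $\sigma$, which is precisely why $\varepsilon'$ has the specific form stated. Finally (vi) follows from the triangle inequality: for $\tilde x\in B(z,\varepsilon'/3)$ one has $\|\tilde x-\tilde z\|<(\delta-\eta)+\varepsilon'/3$ and $\|\tilde x-g\|>\delta-\varepsilon'/3$ for every $g\in G(z)$, and $\eta=\varepsilon'>2\varepsilon'/3$ produces the strict inequality.
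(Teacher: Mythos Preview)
Your argument is correct. Both you and the paper place $\tilde z$ on the segment $[z,y]$ with $\|\tilde z-y\|=\varepsilon'$ and build $\tilde G$ by composing $G$ with a retraction $R_{\cdot,z}$ and adjoining a local patch, but the technical implementations differ. The paper uses the larger retraction radius $\sigma$ (not $2\varepsilon'$) and, instead of recycling Lemma~\ref{lem:define-L}, introduces a new singleton-valued auxiliary map $g(x)=\{\lambda_{z,\sigma}(x)\,\tilde z+(1-\lambda_{z,\sigma}(x))\,y\}$ via Lemma~\ref{lem:lambda}, setting $\tilde G(x)=G(R_{\sigma,z}(x))\cup g(x)$ on $\overline B(z,\sigma)$ without removing anything from $G(z)$. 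Because of the larger radius, the paper's estimate for (v) requires the sharper computation $\Lip(G)\sigma+\varepsilon'=\sigma(3\Lip(G)+1)/4<\sigma$, whereas your small radius $\delta_1=2\varepsilon'$ makes both error contributions of order $\varepsilon'$ and the bound $(2\Lip(G)+1)\varepsilon'<\sigma$ follows with room to spare. Your reuse of $L_{z,\tilde z,\delta_1}$ is more economical, and your observation that $\dist(\tilde z,G(z))=\eta$ forces $G(z)\cap B(\tilde z,\eta)=\emptyset$, so the excision $G(z)\setminus B(\tilde z,\eta)$ is vacuous and you are in effect simply adjoining $M(x)$ to $G(z)$, just as the paper adjoins $g(x)$. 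One small side effect worth noting: with your construction $\tilde G$ is actually constant (equal to $G(z)\cup\{\tilde z\}$) on $B(z,\delta_1/4)=B(z,\varepsilon'/2)$, since $\mu_{z,\delta_1}$ vanishes there; the paper's $\tilde G$ does not enjoy such a constancy region, but its $G$-part $G(R_{\sigma,z}(\cdot))$ is constant on the larger ball $B(z,\sigma)$, a feature that is exploited later in Proposition~\ref{prop:trajectory-unchanged}.
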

We will prove that proposition in smaller parts, but want to show the bigger picture first. Inserting $F$ for the mapping $G$ and $x_0$ for $z$ in Proposition~\ref{prop:construction-g} will yield the mapping $G_0$ and $x_1$, so we get
\begin{align*}
&G=F ,\quad \ G=G_0 ,\quad   &&\ldots,  &&&G=G_{n-1}\\
&\quad \downarrow \qquad \qquad \downarrow && \quad &&& \downarrow \qquad \ \  \\
&\tilde G=G_0 ,\quad  \tilde G = G_1, \quad  &&\ldots,  &&& \tilde G = G_n \ \ \
\end{align*}
and a sequence of points $x_0,\ldots, x_{n+1}$. Using suitable conditions on the constants, this sequence satisfies the conditions for the inductive construction in \hyperref[sec:claim2]{Claim 2}.

\paragraph{Proof of Proposition~\ref{prop:construction-g}} We divide the proof into a number of lemmas. Let $G$ and $z$ be as in Proposition~\ref{prop:construction-g}. Let $y\in P_{G(z)}(z)$ be arbitrary and set
\begin{align} \label{eq:const-0}
\delta := \dist(z,G(z)),\ 0<\sigma \leq \frac{\delta}{2}, \ \varepsilon := \frac{\sigma (1-\Lip(G))}{3}, \ \varepsilon' := \frac{3}{4}\varepsilon.
\end{align}
Moreover let $\lambda_{z,\sigma}$ be as in Lemma~\ref{lem:lambda} and 
\[
\tilde z := \frac{\varepsilon'}{\delta} z + (1-\frac{\varepsilon'}{\delta})y.
\]
Thus, 
\[\dist(\tilde z, G(z))=\|\tilde z-y\| = \left\|\frac{\varepsilon'}{\delta} (z-y) + y-y\right\| = \frac{\varepsilon'}{\delta} \|z-y\| = \varepsilon'. \]
We define
\begin{align} \label{eq:constr-g}
g=g_{\tilde z,\sigma}:C\to \K(C):x\mapsto \{\tilde z\lambda_{z,\sigma}(x) + y(1-\lambda_{z,\sigma}(x))\},
\end{align}
and note that for all $x\not\in B(z,\sigma/2)$ the mapping is constant with $g(x)=y$. Note that the Lipschitz constant of $g$ is bounded from above by $2 \frac{\varepsilon'}{\sigma}$. To see this, let $x_1,x_2\in C$ and note that
\begin{align}
H(g(x_1),g(x_2)) &= \|\tilde z\lambda_{z,\sigma}(x_1)+(1-\lambda_{z,\sigma}(x_1))y - \tilde z\lambda_{z,\sigma}(x_2) - (1- \lambda_{z,\sigma}(x_2))y\| \nonumber \\
&\leq \|\tilde z(\lambda_{z,\sigma}(x_1)-\lambda_{z,\sigma}(x_2)) +y(\lambda_{z,\sigma}(x_2)- \lambda_{z,\sigma}(x_1))\|\nonumber  \\ &= |\lambda_{z,\sigma}(x_1)-\lambda_{z,\sigma}(x_2)|\cdot \|y-\tilde z\|\nonumber \\
&\leq \frac{2\varepsilon}{\sigma}\|x_1-x_2\|. \label{eq:lipschitz-g}
\end{align}
Further recall $R_{\sigma,z}$ from Lemma~\ref{lem:retraction} and define $\tilde G: C\to \K(C)$ by 
\begin{align} \label{eq:constr-G}
\tilde G(x) := \begin{cases} 
G(R_{\sigma,z}(x)), & x\not \in B(z, \sigma) \\
G(R_{\sigma,z}(x))\cup g(x),& x\in \overline{B}(z,\sigma)
\end{cases}.
\end{align}
Taking into account Lemma~\ref{lem:proj-close}, it is clear that $P_{\tilde G(z)}(z) = \{\tilde z\}$. So $\tilde G$ satisfies $ (\ref{prop:constr:form}) $ and $ (\ref{prop:constr:proj}) $. Moreover the upper bound on $\sigma$ ensures that $\tilde z\not \in \overline{B}(z,\sigma)$, i.e. $ (\ref{prop:constr:sigma}) $. The next lemma shows claim $(\ref{prop:constr:Lip})$.
\begin{lem}\label{lem:G-Lipschitz}
	The mapping $\tilde G$ satisfies $\Lip(\tilde G)\leq \max\{\Lip(G),2\varepsilon/\sigma\}\leq  1$.  In particular since $G$ is a strict contraction, so is $\tilde G$. 
\end{lem}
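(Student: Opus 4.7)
The plan is to invoke Lemma~\ref{lem:Lip-on-ball} with the ball $B(z, \sigma)$, reducing the claim to a Lipschitz estimate on each of the two pieces of the piecewise definition \eqref{eq:constr-G} separately, plus a continuity check on the sphere $\partial B(z, \sigma)$.

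On the outer piece $C \setminus B(z, \sigma)$, where $\tilde G(x) = G(R_{\sigma, z}(x))$, I would combine the Lipschitz constant of $G$ with nonexpansiveness of $R_{\sigma, z}$ (Lemma~\ref{lem:retraction}) to obtain a Lipschitz bound of $\Lip(G)$ there. On the inner piece $\overline{B}(z, \sigma) \cap C$, the key observation (Remark~\ref{rem:retraction}) is that $R_{\sigma, z}$ is constantly equal to $z$, so $\tilde G(x) = G(z) \cup g(x)$; Lemma~\ref{lem:hausdorff-dist-ineq} then collapses the Hausdorff distance onto $H(g(x_1), g(x_2))$, which by \eqref{eq:lipschitz-g} is at most $(2\varepsilon/\sigma)\|x_1 - x_2\|$.

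The continuity check needed to apply Lemma~\ref{lem:Lip-on-ball} is the only real (though mild) subtlety: for $x$ with $\|x - z\| = \sigma$ one has $R_{\sigma, z}(x) = z$, and because $\|x - z\| = \sigma > \sigma/2$ the weight $\lambda_{z, \sigma}(x)$ vanishes, forcing $g(x) = \{y\}$. Since $y \in P_{G(z)}(z) \subset G(z)$, both branches of \eqref{eq:constr-G} yield $G(z)$ on $\partial B(z, \sigma)$, so $\tilde G$ is continuous across the boundary.

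Putting these pieces together via Lemma~\ref{lem:Lip-on-ball} gives $\Lip(\tilde G) \leq \max\{\Lip(G), 2\varepsilon/\sigma\}$. Substituting $\varepsilon = \sigma(1 - \Lip(G))/3$ from \eqref{eq:const-0} yields $2\varepsilon/\sigma = 2(1 - \Lip(G))/3 < 2/3$, so both quantities inside the max are strictly less than $1$ whenever $G$ is a strict contraction, which proves the final clause of the lemma.
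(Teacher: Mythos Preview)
Your proposal is correct and follows essentially the same route as the paper's own proof: verify continuity on $\partial B(z,\sigma)$ via $g(x)=\{y\}\subset G(z)$, bound the inner piece using Lemma~\ref{lem:hausdorff-dist-ineq} together with \eqref{eq:lipschitz-g}, bound the outer piece using Lemma~\ref{lem:retraction}, and glue via Lemma~\ref{lem:Lip-on-ball}. Your treatment of the inner piece is in fact slightly cleaner than the paper's---you immediately use $R_{\sigma,z}(x_1)=R_{\sigma,z}(x_2)=z$ to reduce to $H(G(z)\cup g(x_1),G(z)\cup g(x_2))$ and then apply Lemma~\ref{lem:hausdorff-dist-ineq} directly, whereas the paper inserts an extra triangle-inequality step before invoking the same observation.
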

\begin{proof}
	First observe that for all $x \in C$ with $\|x-z\|=\sigma$ we have 
	\[
	\tilde{G}(R_{\sigma,z}(x)) = G(z) = G(R_{\sigma,z}(x))\cup g(x),
	\]
	since $g(x) \subset G(z)$ for all $x$ with distance $\sigma$ to $z$. Therefore the mapping $\tilde{G}$ is continuous.\\
	Using \eqref{eq:lipschitz-g}, Lemma~\ref{lem:retraction}, Lemma~\ref*{lem:hausdorff-dist-ineq} and Lemma~\ref{lem:lambda} for $x_1,x_2\in C\cap B(z,\sigma)$ we obtain
	\begin{align*}
	H(\tilde G(x_1),\tilde G(x_2)) &= H(G(R_{\sigma, z}(x_1))\cup g(x_1), G(R_{\sigma, z}(x_2))\cup g(x_2)) \\
	&\leq H(G(R_{\sigma, z}(x_1))\cup g(x_1), G(R_{\sigma, z}(x_2))\cup g(x_1)) + \\
	&\qquad \qquad +H(G(R_{\sigma, z}(x_2))\cup g(x_1), G(R_{\sigma, z}(x_2))\cup g(x_2))\\
	&\leq \max\{\Lip(G)H(R_{\sigma, z}(x_1),R_{\sigma, z}(x_2)), H(g(x_1), g(x_1))\} + \\
	&\qquad \qquad + \max\{H(G(R_{\sigma, z}(x_2)),G(R_{\sigma, z}(x_2))),H(g(x_1),g(x_2))\}\\
	%&\leq \Lip(G) \|x_1-x_2\| + \frac{2{\varepsilon}}{\sigma}\|x_1-x_2\|\\
	&\underset{\text{by \eqref{eq:lipschitz-g}}}{\leq} \frac{2{\varepsilon}}{\sigma} \|x_1-x_2\|. %\left(\Lip(G) + \frac{2{\varepsilon}}{\sigma}\right) \|x_1-x_2\|,
	\end{align*}
	where we used that $R_{\sigma,z}(x_1)=z=R_{\sigma,z}(x_2)$.
	In the case of $x_1,x_2\in C\setminus B(z,\sigma)$ one obtains
	\begin{align*}
	H(\tilde G(x_1),\tilde G(x_2)) &= H(G(R_{\sigma, z}(x_1)), G(R_{\sigma, z}(x_2))) \\ 
	&\leq \Lip(G) \|R_{\sigma, z}(x_1)-R_{\sigma, z}(x_2)\| \\
	&\leq \Lip(G)\|x_1-x_2\|.
	\end{align*}
	Now Lemma~\ref{lem:Lip-on-ball} and the choice of $\varepsilon$ yield
	\[\Lip(\tilde G) \leq \max\left\{\Lip(G), \frac{2\varepsilon}{\sigma}\right\} =\max\left\{ \Lip(G) , \frac{2 (1-\Lip(G))}{3}\right\} \leq 1,\]
	where equality would only be attained when $\Lip(G)=1$ and therefore this yields the claimed result.
\end{proof}
The next lemma shows $(\ref{prop:constr:distance})$.
\begin{lem}\label{lem:f-g-close}
	$G$ and $\tilde G$ satisfy $\rho(G,\tilde G)<\sigma.$
\end{lem}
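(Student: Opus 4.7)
The plan is to bound $H(G(x),\tilde G(x))$ uniformly in $x\in C$ by splitting according to whether $x$ lies inside or outside the ball $B(z,\sigma)$, using the Lipschitz bound on $R_{\sigma,z}$ (Lemma~\ref{lem:retraction}) together with the observation that the single point $\tilde z$ added by $g$ lies at distance at most $\varepsilon'$ from $y\in G(z)$. Both bounds will be shown to be strictly smaller than $\sigma$ because of the explicit choice $\varepsilon=\sigma(1-\Lip(G))/3$.

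\textbf{Case 1: $x\in C\setminus B(z,\sigma)$.} Here $\tilde G(x)=G(R_{\sigma,z}(x))$, and a direct computation with $r_\sigma$ (as in the proof of Lemma~\ref{lem:retraction}) gives $\|x-R_{\sigma,z}(x)\|\le \sigma$. Hence
\[
H(G(x),\tilde G(x))\le \Lip(G)\,\|x-R_{\sigma,z}(x)\|\le \Lip(G)\,\sigma<\sigma.
\]

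\textbf{Case 2: $x\in \overline{B}(z,\sigma)$.} Here $R_{\sigma,z}(x)=z$, so $\tilde G(x)=G(z)\cup g(x)$, and the triangle inequality gives
\[
H(G(x),\tilde G(x))\le H(G(x),G(z))+H(G(z),G(z)\cup g(x)).
\]
The first summand is at most $\Lip(G)\,\sigma$. For the second one, note that $g(x)$ is a single point and that $g(x)=y+\lambda_{z,\sigma}(x)(\tilde z-y)$, so
\[
\dist(g(x),G(z))\le \|g(x)-y\|\le \lambda_{z,\sigma}(x)\,\|\tilde z-y\|\le \varepsilon',
\]
while every point of $G(z)$ lies in $G(z)\cup g(x)$ trivially. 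Therefore $H(G(z),G(z)\cup g(x))\le \varepsilon'$, and putting this together
\[
H(G(x),\tilde G(x))\le \Lip(G)\,\sigma+\varepsilon'=\Lip(G)\,\sigma+\frac{1-\Lip(G)}{4}\sigma=\frac{3\Lip(G)+1}{4}\,\sigma<\sigma,
\]
since $\Lip(G)<1$.

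\textbf{Conclusion.} Taking the supremum over $x\in C$ and noting that the larger of the two bounds is $\tfrac{3\Lip(G)+1}{4}\sigma$, we obtain $\rho(G,\tilde G)\le \tfrac{3\Lip(G)+1}{4}\sigma<\sigma$, which proves the claim. The only mildly delicate point is checking that the two case-bounds agree on the boundary sphere $\|x-z\|=\sigma$, where $\lambda_{z,\sigma}(x)=0$ forces $g(x)=y\in G(z)$ and $R_{\sigma,z}(x)=z$; this is what guarantees continuity of $\tilde G$ (already used in Lemma~\ref{lem:G-Lipschitz}) and makes the two case-bounds consistent.
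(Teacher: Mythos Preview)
Your proof is correct and follows essentially the same route as the paper: the same two-case split according to whether $x\in B(z,\sigma)$, the same use of $\|x-R_{\sigma,z}(x)\|\le\sigma$ outside the ball, and the same triangle-inequality decomposition $H(G(x),G(z))+H(G(z),G(z)\cup g(x))\le \Lip(G)\sigma+\varepsilon'$ inside, leading to the identical final bound $\tfrac{1+3\Lip(G)}{4}\sigma<\sigma$. Your explicit justification of $H(G(z),G(z)\cup g(x))\le\varepsilon'$ via $\|g(x)-y\|\le\lambda_{z,\sigma}(x)\|\tilde z-y\|$ is a nice touch that the paper leaves implicit.
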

\begin{proof}
	In the case of $x \in C\setminus B(z,{\sigma})$ we may use Lemma~\ref{lem:retraction} to obtain
	\[H(G(x),\tilde G(x)) = H(G(x),G(R_{\sigma, z}(x)))\leq \Lip(G)\underbrace{\|x-R_{\sigma, z}(x)\|}_{\mathclap{=\left\|\sigma \frac{x-z}{\|x-z\|}\right\|}} \leq \Lip(G){\sigma}.\]
	Taking $x\in C\cap B(z,{\sigma})$ and using Lemma~\ref{lem:hausdorff-dist-ineq} together with the triangle inequality, we obtain that
	\begin{align*}
	H(G(x),\tilde G(x)) &= H(G(x),G(R_{\sigma, z}(x))\cup g(x))\\
	&\leq H(G(x),G(z)) + H(G(z),G(z)\cup g(x))\\
	&\leq \Lip(G)\sigma + \varepsilon'
	\end{align*}
	This yields
	\[H(G(x),\tilde G(x))\leq {\sigma}\left(\Lip(G) + \frac{1-\Lip(G)}{4}\right)= \frac{1+3\Lip(G)}{4}\sigma<{\sigma}.\]
\end{proof}

The way $\tilde G$ is constructed yields a very useful property. Not only the projection from $z$ onto $\tilde G(z)$ is unique, but every point close to $z$ is projected on $\tilde z$, i.e. $(\ref{prop:constr:points-close})$ holds:
\begin{lem}\label{lem:constr-proj-ball-x0}
	For every $\tilde x\in C$ with $	\|\tilde x - z \| < \varepsilon'/3$ the identity $P_{\tilde G(z)}(\tilde x) = \{\tilde z\}$ holds.
\end{lem}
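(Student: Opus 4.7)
The plan is to prove this by a direct triangle-inequality computation, exploiting the fact that $\tilde z$ was placed on the segment $[z,y]$ at a controlled distance from $y \in P_{G(z)}(z)$, while every other point of $\tilde G(z)=G(z)\cup\{\tilde z\}$ lies in $G(z)$ and is therefore at distance at least $\delta$ from $z$.

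First I would record the two key numerical facts from the construction: the collinearity computation already done before the lemma gives $\|z-\tilde z\| = \delta - \varepsilon'$ (since $\tilde z = \tfrac{\varepsilon'}{\delta}z + (1-\tfrac{\varepsilon'}{\delta})y$), and by definition of $\delta = \dist(z,G(z))$ we have $\|z-w\| \geq \delta$ for every $w \in G(z)$. Now fix $\tilde x \in C$ with $\|\tilde x - z\| < \varepsilon'/3$.

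Next I would estimate from above the distance $\|\tilde x - \tilde z\|$ using the triangle inequality:
\[
\|\tilde x - \tilde z\| \leq \|\tilde x - z\| + \|z - \tilde z\| < \tfrac{\varepsilon'}{3} + (\delta - \varepsilon') = \delta - \tfrac{2\varepsilon'}{3}.
\]
Then I would estimate from below, for an arbitrary $w \in G(z)$, the distance $\|\tilde x - w\|$ via the reverse triangle inequality:
\[
\|\tilde x - w\| \geq \|z - w\| - \|\tilde x - z\| > \delta - \tfrac{\varepsilon'}{3}.
\]

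Combining these bounds gives $\|\tilde x - w\| > \delta - \tfrac{\varepsilon'}{3} > \delta - \tfrac{2\varepsilon'}{3} > \|\tilde x - \tilde z\|$ for every $w \in G(z)$, so no point of $G(z)$ can lie in $P_{\tilde G(z)}(\tilde x)$, and since $\tilde G(z) = G(z)\cup\{\tilde z\}$ this forces $P_{\tilde G(z)}(\tilde x) = \{\tilde z\}$. There is no real obstacle here beyond bookkeeping; the only thing to verify is that $\varepsilon' > 0$ (which is guaranteed because $\Lip(G)<1$ and $\sigma > 0$) so that the strict inequality $\delta - \varepsilon'/3 > \delta - 2\varepsilon'/3$ actually holds, upgrading ``nearest'' to ``uniquely nearest''.
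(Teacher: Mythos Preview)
Your proof is correct and follows essentially the same approach as the paper: both use the decomposition $\tilde G(z)=G(z)\cup\{\tilde z\}$, the identity $\|z-\tilde z\|=\delta-\varepsilon'$, and a triangle-inequality comparison showing any $w\in G(z)$ is strictly farther from $\tilde x$ than $\tilde z$ is. The paper phrases this as a contradiction (assuming a competitor $\tilde y\in G(z)$ lies in the projection and deducing $\|z-\tilde y\|<\delta$), while you argue directly, but the underlying estimate is identical.
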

\begin{proof}
	Let $\tilde{x}\in C\cap B(z, {\varepsilon'}/3)$ and assume there would be a $\tilde{y}\in \tilde G(z)\setminus\{\tilde z\}$ such that $\tilde y \in P_{\tilde G(z)}(\tilde x)$. Now $\tilde z \in \tilde G(z)$ implies $\|\tilde{x}-\tilde{y}\|\leq \|\tilde{x}-\tilde z\|$. \\
	Moreover $\tilde{y}\in G(z)$, since $\tilde y\not = \tilde z$ and 
	\[\tilde G(z) = G(R_{\sigma,z}(z))\cup g(z) = G(z)\cup\{\tilde z\}.\]
	Thus  
	\[\|z- \tilde{y}\| \leq \|z - \tilde{x}\| + \|\tilde{x}-\tilde{y}\| < \frac{{\varepsilon'}}{3} + \|\tilde{x}-\tilde z\| < \frac{2{\varepsilon'}}{3} + \underbrace{\|z-\tilde z\|}_{=\delta - \varepsilon'} < \delta,\]
	which is a contradiction to $\delta = \dist(z,G(z))$.
\end{proof}
\noindent This concludes the proof of Proposition~\ref{prop:construction-g}. 

\paragraph{Proofs of Claim 1 and Claim 2}
For the purpose of proving \hyperref[sec:claim1]{Claim 1} stated above, we will need further restrictions on $\sigma$ and set 
\begin{align} \label{eq:def-sigma-0}
\sigma_0:=r\min\left\{\frac{\delta_0}{2}, \frac{1}{n+1}\right\},
\end{align}
and
$\delta_0, \varepsilon_0, \varepsilon'_0$ as in \eqref{eq:const-0}. Let $F$, $G_0$, $x_0$ and $x_1$ be $G$, $\tilde G$, $z$ and $\tilde z$ as in \eqref{eq:constr-G} or construct $G_0$ as in Lemma~\ref{lem:make-fixed-point-isolated} if $x_0\in F(x_0)$, i.e. $\{x_1\}= P_{G_0(x_0)}(x_0)$. Assume that we have $F,G_0, G_1,\ldots G_{m-1}$ and $x_0,\ldots, x_m \in C$ already constructed, with constants $\delta_{i-1}, \sigma_{i-1}, \varepsilon_{i-1}, \varepsilon'_{i-1}$ and 
\begin{align} \label{eq:inductive-traj}
P_{G_{i-1}(x_{k})}(x_{k})=\{x_{k+1}\} \quad \text{ for } 0\leq k< i\leq m,
\end{align}
as in \hyperref[sec:claim2]{Claim 2}. We are presented with two possibilities. Either $x_m\not\in G_{m-1}(x_m)$ or $x_m$ is a fixed point of $G_{m-1}$ and is the first in the sequence with this property. In the first case we set
\begin{align} \label{eq:const-i} 
\delta_m &:= \dist(x_m,G_{m-1}(x_m)), \\ \sigma_m&:=\min\left\{\frac{\varepsilon'_{m-1}}{3}, \frac{\delta_m}{2}, \frac{\|x_m-x_0\|}{2},\ldots, \frac{\|x_m-x_{m-1}\|}{2} \right\}, \label{eq:const-i-2} \\ \varepsilon_m &:= \frac{\sigma_m(1-\Lip(G_{m-1}))}{3},\quad  \varepsilon'_m := \frac{\sigma_m(1-\Lip(G_{m-1}))}{4}.\label{eq:const-i-3}
\end{align}
In particular the $\sigma_i$ satisfy $0< \sigma_i < r/(n+1)$. In the case of $\delta_m = 0$, that is when $x_m \in G_{m-1}(x_m)$, we use Lemma~\ref{lem:make-fixed-point-isolated} to construct a $G_m$ such that $\rho(G_m, G_{m-1})<\sigma_m$ and $x_m\in G_m(x_m)$ becomes an isolated point. Then we set all the following $G_{m+j} = G_m$ and $x_{m+j}=x_m$ for $j\geq 1$. Further we continue the sequences $\sigma_m,\varepsilon_m,\varepsilon'_{m}$ by setting 
\begin{align}\label{eq:const-i-fixed-point-1}
	\sigma_{m}&:= \varepsilon'_{m-1}/3,\\ \label{eq:const-i-fixed-point-2}
	\varepsilon_{m} &:= \min\left\{ \frac{\sigma_{m}}{18}, \frac{\sigma_{m}(1-\Lip(G_{m}))}{3} \right\} \text{ and }\\ \label{eq:const-i-fixed-point-3}
	\varepsilon'_{m}&:= \min\left\{ \frac{\sigma_{m}}{24}, \frac{\sigma_{m}(1-\Lip(G_{m}))}{4} \right\}.
\end{align}
The number $\sigma_m$ as in \eqref{eq:const-i-2} only vanishes when $\delta_m$ vanishes, this is a consequence of the following Lemma.
\begin{lem}\label{lem:never-repeating-trajectory}
	For a strict contraction $G$, $\zeta_0\in C$, $n\in \N$ and the trajectory $\{\zeta_i\}_{i=0}^\infty$ starting at $\zeta_0$ w.r.t. $G$ we have that all $\zeta_i$ are different as long as none of them is a fixed point, i.e. $\zeta_i=\zeta_j$ if and only if $i=j$ for $0\leq i,j\leq m$ or $\zeta_i \in G(\zeta_i)$. Moreover, for $1\leq i\leq m$,
	\[
	\dist(\zeta_i,G(\zeta_i)) \leq \Lip(G)^i \dist(\zeta_0,G(\zeta_0)).
	\]
\end{lem}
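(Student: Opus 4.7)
The plan is to prove the distance estimate first and then derive the distinctness statement from it as an immediate corollary.

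\textbf{Step 1: The one-step contraction estimate.} For any $i\geq 1$, since $\zeta_i \in G(\zeta_{i-1})$, we have $\dist(\zeta_i, G(\zeta_{i-1})) = 0$. Applying Lemma~\ref{lem:hausdorff} with $x = \zeta_i$, $A = G(\zeta_i)$ and $B = G(\zeta_{i-1})$, and then using the fact that $G$ is nonexpansive (indeed a strict contraction), I obtain
\[
\dist(\zeta_i, G(\zeta_i)) \leq \dist(\zeta_i, G(\zeta_{i-1})) + H(G(\zeta_i), G(\zeta_{i-1})) \leq \Lip(G)\,\|\zeta_i - \zeta_{i-1}\|.
\]
But $\zeta_i \in P_{G(\zeta_{i-1})}(\zeta_{i-1})$ means exactly $\|\zeta_{i-1} - \zeta_i\| = \dist(\zeta_{i-1}, G(\zeta_{i-1}))$, so
\[
\dist(\zeta_i, G(\zeta_i)) \leq \Lip(G)\,\dist(\zeta_{i-1}, G(\zeta_{i-1})).
\]

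\textbf{Step 2: Iterated bound.} A trivial induction on $i$ using Step~1 yields the claimed inequality $\dist(\zeta_i, G(\zeta_i)) \leq \Lip(G)^i \dist(\zeta_0, G(\zeta_0))$ for all $1\leq i\leq m$.

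\textbf{Step 3: Distinctness from the bound.} Suppose $\zeta_i = \zeta_j$ for some $0 \leq i < j \leq m$ with $\zeta_i \notin G(\zeta_i)$. Since $G(\zeta_i) \in \K(C)$ is in particular closed, $\zeta_i \notin G(\zeta_i)$ gives $\dist(\zeta_i, G(\zeta_i)) > 0$. Applying Step~1 iteratively starting from index $i$ (rather than from $0$) gives
\[
\dist(\zeta_j, G(\zeta_j)) \leq \Lip(G)^{\,j-i}\,\dist(\zeta_i, G(\zeta_i)).
\]
But $\zeta_i = \zeta_j$ implies $\dist(\zeta_j, G(\zeta_j)) = \dist(\zeta_i, G(\zeta_i))$, so we obtain
\[
\dist(\zeta_i, G(\zeta_i)) \leq \Lip(G)^{\,j-i}\,\dist(\zeta_i, G(\zeta_i)),
\]
which, because $\Lip(G) < 1$ and $j - i \geq 1$, forces $\dist(\zeta_i, G(\zeta_i)) = 0$, contradicting our assumption. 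Hence either all $\zeta_0,\ldots,\zeta_m$ are pairwise distinct, or some $\zeta_i$ is a fixed point of $G$. The converse direction (if $i=j$ then $\zeta_i=\zeta_j$) is trivial, and if $\zeta_i \in G(\zeta_i)$ then by uniqueness of the nearest point choice the trajectory can indeed stall at $\zeta_i$, covering the ``or'' alternative.

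There is no real obstacle here: the whole argument rides on the single nonexpansiveness-plus-projection inequality of Step~1. The only thing to be slightly careful about is invoking Lemma~\ref{lem:hausdorff} in the correct direction and remembering that compactness of $G(\zeta_i)$ is what lets us translate $\dist(\zeta_i,G(\zeta_i))=0$ into $\zeta_i \in G(\zeta_i)$.
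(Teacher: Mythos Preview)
Your proof is correct. Steps~1 and~2 coincide with the paper's argument essentially verbatim (the paper uses Lemma~\ref{lem:hausdorff} in the same way to get the one-step inequality and then iterates). The difference is in Step~3: the paper argues distinctness via a reverse triangle inequality,
\[
\|\zeta_i-\zeta_j\| \;\geq\; \|\zeta_j-\zeta_{j+1}\| - \sum_{k=j+1}^{i-1}\|\zeta_k-\zeta_{k+1}\| \;\geq\; \|\zeta_j-\zeta_{j+1}\|\Bigl(1-\sum_{k=1}^{i-j-1}\Lip(G)^k\Bigr),
\]
and then reads off that equality with zero forces $\zeta_j=\zeta_{j+1}$. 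Your argument instead exploits directly that $\zeta_i=\zeta_j$ forces $\dist(\zeta_i,G(\zeta_i))=\dist(\zeta_j,G(\zeta_j))$, which combined with the strict contraction factor $\Lip(G)^{j-i}<1$ immediately yields $\dist(\zeta_i,G(\zeta_i))=0$. This is shorter and sidesteps a delicate point in the paper's chain (the bracketed factor $1-\sum_k \Lip(G)^k$ need not be nonnegative when $\Lip(G)\geq 1/2$, so the displayed ``$\geq 0$'' there is not literally justified by the preceding expression). Your route therefore buys a cleaner and more robust conclusion at no extra cost.
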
 
\begin{proof}
	First note that
	\begin{align*}
	\|\zeta_{i+1}-\zeta_i\|= \dist(\zeta_i,G(\zeta_i)) \leq& \quad \dist(\zeta_i,G(\zeta_{i-1})) + H(G(\zeta_{i-1}),G(\zeta_i))\\
	\underset{\mathclap{\zeta_i\in G(\zeta_{i-1})}}{\leq}& \quad \Lip(G) \|\zeta_i-\zeta_{i-1}\|.
	\end{align*}
	The first inequality is Lemma~\ref{lem:hausdorff}. Applying this inductively yields the desired inequality. Now, assume w.l.o.g. $i>j$, we observe that
	\begin{align*}
	\|\zeta_i-\zeta_j\| \geq \|\zeta_j-\zeta_{j+1}\|-\sum_{k=j+1}^{i-1}\|\zeta_k-\zeta_{k+1}\|\geq \|\zeta_j-\zeta_{j+1}\| \left( 1-\sum_{k=1}^{i-j-1}\Lip(G)^{k}  \right) \geq 0,
	\end{align*}
	and equality only holds when $\zeta_j = \zeta_{j+1}$, i.e. when $\zeta_j$ is already a fixed point. If none of the $\zeta_i$ is a fixed point we have $\|\zeta_i-\zeta_j\|>0$ for $i\not = j$.
\end{proof}
\noindent In the case that $x_m\not\in G_{m-1}(x_m)$, now using Proposition~\ref{prop:construction-g} with $G=G_{m-1}$, $z=x_m$, $\delta=\delta_{m}$, $\sigma=\sigma_{m}$, $\varepsilon' = \varepsilon'_m$ and \eqref{eq:constr-G} we construct the mapping $\tilde G = G_m$ and $\tilde z = x_{m+1}$. \\
In the case that $x_m \in G_{m-1}(x_m)$ we distinguish two cases: If $m$ is the first index that we reach a fixed point, we use Lemma~\ref{lem:make-fixed-point-isolated} to construct $G_m$ such that $x_{m+1}=x_m$ is an isolated point in $G_m(x_m)$. If $m$ is not the first index that we reach a fixed point, we set $G_m=G_{m-1}$. Further set $\sigma_m, \varepsilon_m, \varepsilon'_m$ as in \eqref{eq:const-i-fixed-point-1}, \eqref{eq:const-i-fixed-point-2} and \eqref{eq:const-i-fixed-point-3}. 
We show now that $G_m$ for $m\leq n$ is $r$-close to $F$ and so one part of \hyperref[sec:claim1]{Claim 1} holds true for $m=n$ and the choice of $\sigma_i$ as in \eqref{eq:const-i-2} for $0\leq i\leq m$.
\begin{lem}\label{lem:distance-to-F}
	The mappings $G_0,\ldots, G_m$ satisfy $\rho(G_i,F)<r$ for $0\leq i\leq m\leq n$.
\end{lem}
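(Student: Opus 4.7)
The plan is straightforward: apply the triangle inequality for $\rho$ to the telescoping chain
\[
G_0 \to G_1 \to \cdots \to G_i
\]
and bound each successive distance by the corresponding $\sigma_k$. By construction, every step in the inductive procedure produces a new mapping that is strictly $\sigma_k$-close to the previous one: in the generic step this is exactly Lemma~\ref{lem:f-g-close} (which is part~$(\ref{prop:constr:distance})$ of Proposition~\ref{prop:construction-g} applied with $\sigma = \sigma_k$), and in the exceptional step where we meet a fixed point we instead invoke Lemma~\ref{lem:make-fixed-point-isolated} with $\varepsilon = \sigma_k$, whose conclusion guarantees $\rho(G_k, G_{k-1}) < \sigma_k$; in the remaining steps after a fixed point has been met we simply have $G_k = G_{k-1}$ and so the contribution is $0$. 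Applied to the initial step (with $G = F$, $G_0 = \tilde G$ and $\sigma = \sigma_0$), Lemma~\ref{lem:f-g-close} also yields $\rho(G_0, F) < \sigma_0$.

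First I would write
\[
\rho(G_i, F) \;\leq\; \rho(G_0, F) + \sum_{k=1}^{i} \rho(G_k, G_{k-1}) \;<\; \sum_{k=0}^{i} \sigma_k.
\]
Then I would invoke the bound $\sigma_k < r/(n+1)$ recorded in the paragraph following \eqref{eq:const-i-fixed-point-3}. Here it is worth noting briefly \emph{why} this bound holds: for $k=0$ it is immediate from \eqref{eq:def-sigma-0}, and for $k \geq 1$ the recursive definitions \eqref{eq:const-i-2} and \eqref{eq:const-i-fixed-point-1} both give $\sigma_k \leq \varepsilon'_{k-1}/3 \leq \sigma_{k-1}/12$, so the bound propagates and in fact $\sigma_k$ decreases geometrically.

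Combining these ingredients, for $i \leq m \leq n$ we get
\[
\rho(G_i,F) \;<\; \sum_{k=0}^{i} \sigma_k \;\leq\; (i+1)\cdot\frac{r}{n+1} \;\leq\; r,
\]
which is the claim. The whole argument is bookkeeping: the only subtlety is making sure every branch of the inductive construction (ordinary step, first fixed-point step, later fixed-point steps) genuinely contributes at most $\sigma_k$ to the telescoping sum, and that the reserved choice $\sigma_0 \leq r/(n+1)$ controls the entire tail. There is no real obstacle — the heavy lifting has already been done when establishing Proposition~\ref{prop:construction-g} and Lemma~\ref{lem:make-fixed-point-isolated}.
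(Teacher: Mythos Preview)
Your proof is correct and follows exactly the same approach as the paper: triangle inequality along the chain $F,G_0,\ldots,G_i$, bounding each increment by the corresponding $\sigma_k$, and then using $\sigma_k \leq r/(n+1)$ to sum up. You are simply more explicit than the paper about why each increment satisfies $\rho(G_k,G_{k-1})<\sigma_k$ across the different branches of the construction and about why the uniform bound on $\sigma_k$ propagates.
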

\begin{proof}
	Since $\rho(G_0,F)<\sigma_0$, $\rho(G_i, G_{i-1}) < \sigma_i$ for $1\leq i\leq m$ and $\sigma_i \leq \frac{r}{n+1}$ for $0\leq i\leq m$, the triangle inequality yields 
	\[\rho(G_m, F) \leq \rho(G_m, G_{m-1}) + \cdots + \rho(G_i,G_{i-1}) + \cdots + \rho(G_0,F) <\sum_{i=0}^m \sigma_i \leq r\sum_{i=0}^n \frac{1}{n+1}=r.\vspace*{-0.5cm}\]
\end{proof}

It is helpful at this point to rephrase Proposition~\ref{prop:construction-g}$(\ref{prop:constr:points-close})$ in terms of $G_0,G_1,\ldots G_m$.
\begin{lem}\label{lem:constr-proj-ball-inductive}
	Let $0\leq i< m$ and $\tilde x\in B(x_i,\varepsilon'_{i}/3)$. Then 
	\[
		P_{G_i(x_i)}(\tilde x) = \{x_{i+1}\}.
	\]
	Further, for $j<i$, a consequence of above statement is that for $\tilde x \in \overline{B}(x_j, \varepsilon'_i/3)$
	\[
		P_{G_i(x_j)}(\tilde x) = \{x_{j+1}\} 
	\] 
	holds.
\end{lem}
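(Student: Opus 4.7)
The first assertion is an immediate reading of Proposition~\ref{prop:construction-g}(\ref{prop:constr:points-close}) at the inductive step (see Section~\ref{sec:inductive-constr}) that produces $G_i$ from $G_{i-1}$. At that step the proposition was invoked with $z = x_i$, $\tilde z = x_{i+1}$, $\sigma = \sigma_i$ and $\varepsilon' = \varepsilon'_i$, and its conclusion~(\ref{prop:constr:points-close}) is literally $P_{G_i(x_i)}(\tilde x) = \{x_{i+1}\}$ for every $\tilde x \in B(x_i, \varepsilon'_i/3)$.

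For the ``further'' assertion the plan is to reduce it to the first assertion applied at the earlier index $j$. That application gives $P_{G_j(x_j)}(\tilde x) = \{x_{j+1}\}$ for all $\tilde x \in B(x_j, \varepsilon'_j/3)$. The constraints \eqref{eq:const-i-2}--\eqref{eq:const-i-3} force $\varepsilon'_k \leq \sigma_k/4$ together with $\sigma_k \leq \varepsilon'_{k-1}/3$, so the sequence $(\varepsilon'_k)_k$ is strictly decreasing and in particular $\varepsilon'_i \leq \varepsilon'_j$; consequently $\overline B(x_j, \varepsilon'_i/3) \subset B(x_j, \varepsilon'_j/3)$, and the identity $P_{G_j(x_j)}(\tilde x) = \{x_{j+1}\}$ persists on the smaller closed ball. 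After this reduction, all that remains is to verify the equality $G_i(x_j) = G_j(x_j)$.

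This last equality I would obtain by telescoping over the intermediate steps $k = j+1, j+2, \dots, i$ and showing $G_k(x_j) = G_{k-1}(x_j)$ at each stage. Indeed, the construction of $G_k$ from $G_{k-1}$ is a perturbation localized around $x_k$ with radius $\sigma_k$ (via \eqref{eq:constr-G}, respectively via the formula in Lemma~\ref{lem:make-fixed-point-isolated} when the step is a fixed-point step), and the choice in \eqref{eq:const-i-2} imposes $\sigma_k \leq \|x_j - x_k\|/2$, so $x_j$ sits outside the ball $B(x_k, \sigma_k)$ on which the modification acts.

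The main obstacle I anticipate is the careful bookkeeping needed to conclude from the piecewise definition \eqref{eq:constr-G} that the perturbed map actually agrees with the unperturbed one on points lying outside the support ball, paying particular attention to the role of the retraction $R_{\sigma_k, x_k}$ at such points and to the case where the step is a fixed-point step governed instead by Lemma~\ref{lem:make-fixed-point-isolated}. Once the stepwise equalities $G_k(x_j) = G_{k-1}(x_j)$ are verified, telescoping them from $k = j+1$ up to $k = i$ yields $G_i(x_j) = G_j(x_j)$, and combining this with the previous paragraph produces the asserted identity $P_{G_i(x_j)}(\tilde x) = \{x_{j+1}\}$ on $\overline B(x_j, \varepsilon'_i/3)$.
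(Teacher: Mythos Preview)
Your treatment of the first assertion and the ball-containment reduction $\overline B(x_j,\varepsilon'_i/3)\subset B(x_j,\varepsilon'_j/3)$ matches the paper exactly, and your observation that the remaining work is the identity $G_i(x_j)=G_j(x_j)$ is spot on (the paper's own proof of the lemma glosses over this point and effectively defers it to Proposition~\ref{prop:trajectory-unchanged}(ii)).

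The gap is in your telescoping plan. The stepwise equality $G_k(x_j)=G_{k-1}(x_j)$ you want to verify is \emph{false} in general. By \eqref{eq:constr-G}, for $x\notin B(x_k,\sigma_k)$ one has
\[
G_k(x)=G_{k-1}\bigl(R_{\sigma_k,x_k}(x)\bigr),
\]
and $R_{\sigma_k,x_k}(x_j)=x_j-\sigma_k\tfrac{x_j-x_k}{\|x_j-x_k\|}\neq x_j$. So the perturbation is \emph{not} localised to $B(x_k,\sigma_k)$; the retraction shifts every exterior point by a vector of norm exactly $\sigma_k$. The obstacle you anticipate is not bookkeeping but an actual failure of the individual identities. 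What the paper does instead (in the proof of Proposition~\ref{prop:trajectory-unchanged}) is unroll all the retractions at once,
\[
G_i(x_j)=G_j\bigl(R_{\sigma_{j+1},x_{j+1}}\circ\cdots\circ R_{\sigma_i,x_i}(x_j)\bigr),
\]
bound the accumulated displacement by $\sum_{l>j}\sigma_l<\sigma_j/2$, and then use that on $B(x_j,\sigma_j)$ the inner retraction $R_{\sigma_j,x_j}$ collapses the argument to $x_j$. The point is that only the \emph{last} map $G_j$ has the flattening feature near $x_j$ that absorbs the total drift; the intermediate maps $G_{k-1}$ do not, so you cannot cancel the retractions one at a time.
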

\begin{proof}
	In the case that none of the $x_i$ are fixed points of $G_m$ the first part is a direct application of Proposition~\ref{prop:construction-g}$(\ref{prop:constr:points-close})$ to $G_i$ and $x_i$. The second part follows since $\varepsilon'_i < \varepsilon'_j$, i.e.
	\[
	\overline B(x_j, \varepsilon'_i/3)\subset \overline B(x_j, \varepsilon'_j/3)
	\]
	and again Proposition~\ref{prop:construction-g}$(\ref{prop:constr:points-close})$.
	In the case that one $x_i$ is a fixed point of $G_m$, and therefore every following $x_{i+1},x_{i+2},...$ point too, by construction $x_i$ is an isolated point of $G_m(x_i)$ by Lemma~\ref{lem:make-fixed-point-isolated}. Therefore, since $\varepsilon_m' \leq \sigma_m/2$, Lemma~\ref{lem:make-fixed-point-isolated} implies $\varepsilon_m'$ defined in \eqref{eq:const-i-fixed-point-3} is small enough to satisfy above claims.
\end{proof}

With this preparation we can prove the following proposition, which is the major tool to prove \hyperref[sec:claim1]{Claim 1} and \hyperref[sec:claim2]{Claim 2}. The proposition and the lemmas before show how to construct the mappings $G_0,\ldots, G_m$, but the following proposition shows that we can indeed continue inductively.
\begin{prop}\label{prop:trajectory-unchanged}
	Let the mappings $F,G_0, \ldots, G_m$ and the trajectory $x_0,\ldots, x_m,x_{m+1}$ be constructed as before. Then the following holds:
	\begin{enumerate}[(i)]
		\item $R_{\sigma_{k+1},x_{k+1}}(R_{\sigma_{k+2},x_{k+2}}(\ldots R_{\sigma_{j+1},x_{j+1}}(R_{\sigma_{j},x_j}(x_k))\ldots )) \in B(x_k, \sigma_k/2)$ for all $0\leq k< j\leq m$,
		\item $G_j(x_k) = G_k(x_k)$ for all $0 \leq k\leq j\leq m$,
		\item $P_{G_j(x_k)}(x_k) = \{x_{k+1}\}$ for all $0\leq k\leq j\leq m$. \label{prop:part3:trajectory-unchanged}
		\item $G_j(\tilde{x})=G_k(x_k)$ and  $P_{G_j(\tilde{x})}(\tilde{x})=P_{G_k(x_k)}(x_k)$ for all $0\leq k\leq j\leq m$ and for all $\tilde{x}\in B(x_k,\varepsilon'_j/3)$.\label{prop:part4:stability}
	\end{enumerate}
\end{prop}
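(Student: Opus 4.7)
The plan is to prove all four parts simultaneously by induction on $j$, with $k$ treated as a parameter in $\{0,\ldots,j\}$. In the base case $j=k$, parts (i) and (ii) are either vacuous or tautological, part (iii) is a direct restatement of Proposition~\ref{prop:construction-g}(\ref{prop:constr:proj}) at the moment $G_k$ is produced with $z=x_k$ and $\tilde z=x_{k+1}$, and the projection assertion of (iv) is exactly the content of Lemma~\ref{lem:constr-proj-ball-inductive}.

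For the inductive step $j-1\Rightarrow j$ with $j>k$, the decisive point is the choice of $\sigma_j$ in \eqref{eq:const-i-2}: because $\sigma_j\le \|x_j-x_k\|/2$, every point of the ball $B(x_k,\varepsilon'_j/3)$ lies outside $B(x_j,\sigma_j)$, so the defining formula \eqref{eq:constr-G} collapses to $G_j(\tilde x)=G_{j-1}(R_{\sigma_j,x_j}(\tilde x))$. Lemma~\ref{lem:retraction} guarantees that this retraction moves $\tilde x$ by at most $\sigma_j$, and since $\sigma_j\le \varepsilon'_{j-1}/3$, the point $R_{\sigma_j,x_j}(\tilde x)$ ends up in $B(x_k,\varepsilon'_{j-1}/3)$, which is precisely the range where part~(iv) of the induction hypothesis applies. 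Specialising this argument to $\tilde x=x_k$ yields (ii), and (iii) is then immediate from (ii) combined with the base case at level $k$. For (i), one iterates this reasoning: the successive $\sigma_i$ decay geometrically (each $\sigma_{i+1}\le \varepsilon'_i/3 \le \sigma_i/12$), so the cumulative displacement $\sum_{i=k+1}^{j}\sigma_i$ stays well below $\sigma_k/2$.

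The main obstacle is making the set equality in (iv) genuinely work for $\tilde x\neq x_k$. Already at the base case $k=j$ the set $G_k(\tilde x)$ contains the extra point $g_k(\tilde x)=\{\lambda_{x_k,\sigma_k}(\tilde x)\,x_{k+1}+(1-\lambda_{x_k,\sigma_k}(\tilde x))\,y\}$, which is distinct from $\{x_{k+1}\}$ whenever $\tilde x\neq x_k$. One therefore has to choose $\varepsilon'_j$ small enough (this is why the factor $1/3$ appears in $\varepsilon'_j/3$ rather than $1$) that the metric projection remains correct even after this perturbation: the argument of Lemma~\ref{lem:proj-close} is repeated at each inductive level, using that $\tilde z=x_{k+1}$ was placed strictly closer to $z=x_k$ than any point of $G_{k-1}(x_k)$, so adding the auxiliary point coming from $g_k(\tilde x)$ does not disturb the projection of $\tilde x$. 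This is where the interplay of the constants $\delta_i,\sigma_i,\varepsilon_i,\varepsilon'_i$ from \eqref{eq:const-i}--\eqref{eq:const-i-3} has to be tracked carefully, and the separate treatment in Lemma~\ref{lem:make-fixed-point-isolated} plugs the remaining gap when some $\delta_i$ vanishes.
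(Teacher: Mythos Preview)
Your overall strategy---unwrap $G_j$ down to $G_k$ via the first branch of \eqref{eq:constr-G}, using $\sigma_l\le\|x_l-x_k\|/2$ to stay outside $B(x_l,\sigma_l)$ at each step, land at a point $w\in B(x_k,\sigma_k/2)$ by a geometric estimate on $\sum\sigma_l$, and then identify $G_k(w)$ with $G_k(x_k)$---is exactly what the paper does; it simply writes out the composition in \eqref{eq:rewriting-freeze}--\eqref{eq:G-k-distance-x-k} rather than phrasing it as an induction on $j$.

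However, the obstacle you raise in your last paragraph is more serious than you indicate, and your proposed repair does not remove it. For $w\in B(x_k,\sigma_k/2)$ with $w\neq x_k$ one has, by \eqref{eq:constr-G}, $G_k(w)=G_{k-1}(x_k)\cup\{p\}$ with $p=\lambda_{x_k,\sigma_k}(w)\,x_{k+1}+(1-\lambda_{x_k,\sigma_k}(w))\,y_k$ lying strictly between $x_{k+1}$ and $y_k$ on the segment $[x_k,y_k]$. This $p$ is the \emph{unique} nearest point of $G_k(w)$ to $x_k$: it is strictly closer than every point of $G_{k-1}(x_k)$, while $x_{k+1}$ itself does not belong to $G_k(w)$ at all. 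Hence both the set equality $G_k(w)=G_k(x_k)$ and the projection equality $P_{G_k(w)}(x_k)=\{x_{k+1}\}$ fail. This hits (ii) and (iii) just as much as (iv): the retractions $R_{\sigma_l,x_l}$ already move $x_k$ off itself before one reaches level $k$, so ``specialising to $\tilde x=x_k$'' does not avoid the problem. Your appeal to Lemma~\ref{lem:constr-proj-ball-inductive} for the base case of (iv) is also a misreading: that lemma gives $P_{G_i(x_i)}(\tilde x)=\{x_{i+1}\}$, the projection of $\tilde x$ onto the \emph{fixed} set $G_i(x_i)$, not $P_{G_i(\tilde x)}(\tilde x)$. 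The paper's own proof glosses over this same point (it asserts \eqref{eq:G-k-distance-x-k} ``by \eqref{eq:constr-G}'', which does not literally hold), so you are not missing a device the paper supplies; but as written neither argument establishes the exact equalities stated in (ii)--(iv).
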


\begin{proof}
	As in Remark~\ref{rem:retraction} we set for $k+1\leq l \leq j$ 
	\[
	\xi_l=R_{\sigma_l,x_l}(x_k) -x_k = -r_{\sigma_l}(x_k-x_l).
	\] 
	Note that $\|\xi_l\|\leq \sigma_l < \sigma_k$. This yields
	\begin{align} \label{eq:rewriting-freeze}
	R_{\sigma_{k+1},x_{k+1}}(R_{\sigma_{k+2},x_{k+2}}(\ldots R_{\sigma_{j},x_j}(x_k))) = x_k + \sum_{l=k+1}^{j}\xi_l
	\end{align}
	and by using $\|\xi_l\|\leq \sigma_l \leq \sigma_{l-1}/3$, we further obtain
	\[
		\left\| \sum_{l=k+1}^{j}\xi_l \right\| < \sum_{l=k+1}^{j}\sigma_l \leq \frac{\sigma_k}{3}\sum_{l=0}^{j-k-1} 3^{-l} = \frac{\sigma_k}{2}\left( 1-\frac{1}{3^{j-k}} \right)<\frac{\sigma_k}{2}. 
	\]
	This means that $R_{\sigma_{k+1},x_{k+1}}(R_{\sigma_{k+2},x_{k+2}}(\ldots R_{\sigma_{j},x_j}(x_k))) \in {B}(x_k,\sigma_k/2)$ and this shows $(i)$.
	Assume now that none of the $x_k$for $0\leq k\leq m$ are a fixed point of $G_m$. Recall that $G_j$ and $\sigma_j$ are defined as in \eqref{eq:constr-G} and \eqref{eq:const-i-2}. In particular, because 
	\[
		\frac{\|x_k-x_l\|}{2} \geq \sigma_j, \quad \text{ for } k+1\leq l \leq j,
	\]
	we cannot be in the second case of \eqref{eq:constr-G}. So we always have 
	\begin{align}\label{eq:rewriting-G-j-to-G-k}
	G_j(x_k) = G_k(R_{\sigma_{k+1},x_{k+1}}(R_{\sigma_{k+2},x_{k+2}}(\ldots R_{\sigma_{j},x_j}(x_k)))).
	\end{align}
	Also we get therefore 
	\begin{align}\label{eq:G-k-distance-x-k}
		G_k(R_{\sigma_{k+1},x_{k+1}}(R_{\sigma_{k+2},x_{k+2}}(\ldots R_{\sigma_{j},x_j}(x_k))))=G_k\left(x_k + \sum_{l=k+1}^{j}\xi_l\right) = G_k(x_k)
	\end{align}
	by \eqref{eq:constr-G}. Assume now that from some $0\leq i<m$ the sequence becomes constant, i.e. $x_i = x_{i+1} = \cdots =x_m$ and $x_i\in G_m(x_i)$ is a fixed point. Then the above still holds by the same argument until $G_{i-1}$. This argument actually also holds for $G_i$, since $\|x_{i-1}-x_i\|/2\geq \sigma_i$ and we only change $G_{i-1}$ globally by less than $\sigma_i$, see Lemma~\ref{lem:make-fixed-point-isolated}. The rest follows by definition of $G_j$, since $G_i = G_{i+1}=\cdots = G_m$.
	This shows $(ii)$ and $(iii)$. Let now $\tilde{x}\in B(x_k,\varepsilon_j'/3)$. If $x_k$ is a fixed point, by the choice of $\varepsilon_j'$ we have that $(iv)$ holds as a consequence of Lemma~\ref{lem:make-fixed-point-isolated}. Now assume that $x_k$ is not a fixed point. Using $\varepsilon'_j<\sigma_k/2$ yields that if we replace $x_k$ by $\tilde{x}$ in \eqref{eq:rewriting-freeze} we obtain that 
	\[
		\left\| \tilde{x}-x + \sum_{l=k+1}^{j}\xi_l \right\|\leq \|\tilde{x}-x\| + \sigma_k/2 \leq \varepsilon_j'/3 + \sigma_k/2 < \sigma_k.
	\] 
	This yields that \eqref{eq:G-k-distance-x-k} holds also for $\tilde{x}$ instead of $x_k$. Thus rewriting $G_j(\tilde{x})$ as in \eqref{eq:rewriting-G-j-to-G-k} and using \eqref{eq:G-k-distance-x-k} we obtain
	\begin{align} \label{eq:G-j-z-equal-G-k-x-k}
		G_j(\tilde{x})=G_k(\tilde{x})=G_k(x_k)
	\end{align}
	for $\tilde{x}\in B(x_k,\varepsilon'_j/3)$. To conclude $(\ref{prop:part4:stability})$ we use \eqref{eq:G-j-z-equal-G-k-x-k} and  insert it in Lemma~\ref{lem:constr-proj-ball-inductive}.
\end{proof}

Thus the construction of $G_0,\ldots, G_n$ for $m=n$ yields that the first $n$ steps of the trajectory starting at $x_0$ w.r.t. $G_n$ are $x_0,\ldots, x_n$, which shows the final piece of \hyperref[sec:claim1]{Claim~1}. Furthermore Proposition~\ref{prop:trajectory-unchanged}$(\ref{prop:part3:trajectory-unchanged})$ yields the last missing part of \hyperref[sec:claim2]{Claim 2}.

\section{Mappings with Non-Regular Trajectories Form a Meagre Subset}
Consider, for $x_0\in C$, the sets
\begin{align*}
	\pazocal A_n(x_0) :=\Big\{F\in \M  \ \Big| \ \exists R>0 \text{ s.t. } &\diam(P_{F(v^F_k)}(v^F_k))\leq  \beta_n \text{ for all } v_0^F,\ldots, v_n^F \text{ with }\\  & \hspace*{1.2cm} v_0\in B(x_0,R), \ v^F_k\in P_{F(v^F_{k-1})}(v^F_{k-1}), n\geq k\geq 1\Big\},
\end{align*}
where $\beta_n\xrightarrow{n\to \infty} 0$ is a decreasing null sequence. We omit $F$ in the superscript, if it is clear to which mapping the trajectory refers. Observe also that 
$$
\bigcap_{n\in \N}\pazocal{A}_n(x_0) = \{F\in \M~|~ \{x_k\}_{k=0}^\infty \text{ is a regular trajectory w.r.t. }F\}.
$$
We will now show that the sets $\M\setminus \pazocal{A}_n(x_0)$ are nowhere dense.\\

\begin{prop}\label{prop:nowhere-dense}
	Let $x_0\in C$ and $n\in \N$. The set $\M\setminus \pazocal A_n(x_0)$ is nowhere dense.
\end{prop}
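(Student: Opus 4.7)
The plan is to show that $\pazocal{A}_n(x_0)$ contains an open dense subset of $\M$, which directly gives nowhere-denseness of its complement. So given an arbitrary $F\in\M$ and $\varepsilon>0$, the goal is to produce a $G\in\M$ with $\rho(F,G)<\varepsilon$ and some $\eta>0$ such that $B_\rho(G,\eta)\subset \pazocal{A}_n(x_0)$.

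First, by Lemma~\ref{lem:contracting-to-a-point}, I may assume $F$ is a strict contraction. Applying the inductive construction of Section~\ref{sec:inductive-constr} (that is, Claim~1/Claim~2) with parameter $r=\varepsilon/2$ produces a strict contraction $G=G_n$ with $\rho(F,G)<\varepsilon$ and a sequence $x_0,x_1,\dots,x_{n+1}$ satisfying $P_{G(x_k)}(x_k)=\{x_{k+1}\}$ for every $0\leq k\leq n$ by Proposition~\ref{prop:trajectory-unchanged}. By the structure of Proposition~\ref{prop:construction-g}, the point $x_{k+1}$ lies strictly between $x_k$ and a point of $P_{G_{k-1}(x_k)}(x_k)$ at distance $\varepsilon_k'<\delta_k$, whereas every other point of $G(x_k)=G_k(x_k)$ lies at distance at least $\delta_k$ from $x_k$. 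Hence there is a positive gap
\[
\gamma_k := \dist(x_k,G(x_k)\setminus\{x_{k+1}\})-\|x_k-x_{k+1}\| \geq \delta_k-\varepsilon_k' > 0.
\]

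The heart of the argument is a stability/propagation step. By Proposition~\ref{prop:trajectory-unchanged}(iv) there is $R_0>0$ with $G(\tilde x)=G(x_k)$ for every $\tilde x\in B(x_k,R_0)$ and every $k\leq n$. Hence for any $\tilde G\in B_\rho(G,\eta)$ and any $\tilde x\in B(x_k,R_0)$ one has $H(\tilde G(\tilde x),G(x_k))<\eta$, so the reasoning of Lemma~\ref{lem:proj-close} (applied to the set $G(x_k)$, the unique closest point $x_{k+1}$, and the gap $\gamma_k$) yields, for $\eta$ small relative to $\gamma_k$, the inclusion
\[
P_{\tilde G(\tilde x)}(\tilde x) \subset \overline{B}(x_{k+1},\theta_k(\eta)),
\]
where $\theta_k(\eta)\to 0$ as $\eta\to 0$. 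Iterating this along the finitely many indices $k=0,1,\dots,n$ and coupling the choice of $\eta$ with an initial radius $R>0$ forces every admissible sequence $v_0\in B(x_0,R)$, $v_k\in P_{\tilde G(v_{k-1})}(v_{k-1})$ to stay within $\min(R_0,\beta_n/2)$ of $x_k$ at every step, and therefore $\diam(P_{\tilde G(v_k)}(v_k))\leq\beta_n$ for all $k\leq n$. This shows $\tilde G\in \pazocal{A}_n(x_0)$.

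The main obstacle is the inductive tuning of parameters: the radius $\theta_k(\eta)$ controlling step $k$ depends on both $\eta$ and the radius around $v_k$, so $\eta$ and $R$ must be chosen small in a coupled fashion and then verified to propagate through all $n$ steps without blowing up. The finiteness of the trajectory length $n$, together with the uniform positive gaps $\gamma_k$ and the freezing property from Proposition~\ref{prop:trajectory-unchanged}, are exactly what provide the slack needed to close this induction.
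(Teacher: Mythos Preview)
Your proposal is correct and follows essentially the same route as the paper: approximate by a strict contraction, run the inductive construction of Section~\ref{sec:inductive-constr} to obtain $G_n$, and then combine the freezing property of Proposition~\ref{prop:trajectory-unchanged}(\ref{prop:part4:stability}) with the isolated-point argument of Lemma~\ref{lem:proj-close} to propagate the diameter bound through all $n$ steps; the paper simply makes the constants explicit (it takes $R=s=\beta_n\varepsilon_n' r/6$) where you argue abstractly with $\eta$ and $\theta_k(\eta)$. One small slip that does not affect the argument: since $\|x_k-x_{k+1}\|=\delta_k-\varepsilon_k'$, the gap is actually $\gamma_k=\varepsilon_k'$, which is smaller than your claimed lower bound $\delta_k-\varepsilon_k'$; all you need, and all the paper uses, is $\gamma_k>0$.
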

\begin{proof}
	Assume that w.l.o.g. $\beta_n \leq 1$, otherwise take $(\beta'_n)_{n\in\N}$ with $\beta'_n = \beta_n / \sup_{n\in\N} \beta_n$ instead of $\beta_n$. Let $\min\{1,\diam(C)\}>r'>0$, $r=r'/2$ and $F\in \M\setminus \pazocal A_n(x_0)$ be given. By Lemma~\ref{lem:contracting-to-a-point} we can find a strict contraction $F'$ with $\rho(F,F')<r$.
	We now use \hyperref[sec:claim1]{Claim 1} from the previous section for ${r}, F'$ and $x_0$ to construct $G_0,\ldots, G_n$ together with constants $(\delta_i,\sigma_i, \varepsilon_i, \varepsilon'_i)_{i=0}^n$ as in \eqref{eq:const-i}, \eqref{eq:const-i-2} and \eqref{eq:const-i-3} or \eqref{eq:const-i-fixed-point-1}, \eqref{eq:const-i-fixed-point-2} and \eqref{eq:const-i-fixed-point-3} respectively, depending on whether the trajectory reaches a fixed point or not. By Lemma~\ref{lem:distance-to-F} the mapping $G_n$ is $r$ close to ${F'}$ and hence $r'$ close to $F$.
	
	\noindent Set 
	\[
	R := \frac{\beta_n\varepsilon_n'r}{6}
	\]
	and observe that by Proposition~\ref{prop:trajectory-unchanged}$(\ref{prop:part4:stability})$ for all $v_0\in B(x_0,R)$
	\[
		G_n(v_0)= G_n(x_0)\quad \text{and} \quad P_{G_n(v_0)}(v_0) = \{x_1\} = P_{G_n(x_0)}(x_0)
	\]
	 holds.
	
	\noindent Thus this $R$ is suited for the one in the definition of the sets $\pazocal A_n$ and we will show now that $G_n$ is in $\pazocal A_n(x_0)$. Lemma~\ref{lem:G-Lipschitz} guarantees that $G_n\in \M$ and  Lemma~\ref{lem:constr-proj-ball-inductive} implies that the trajectory w.r.t. $G_n$ and starting with $v_0$ is unique in the first $n$ steps, i.e. $G_n\in \pazocal{A}_n(x_0)$.
	
	\noindent Set 
	\[
	s := \frac{\beta_n \varepsilon'_n r}{6}
	\] and let $G'\in B_\rho(G_n,s)$. 
	We will show that $G'\in \pazocal{A}_n(x_0)$. Note that 
	\[
		H(G'(v_0),G_n(v_0))\leq \rho(G',G_n) < s <  \frac{\beta_n\varepsilon'_n}{6}.
	\] 
	First we will show that each point in a trajectory $\{v_0,z_i\}_{i=1}^n$ starting at $v_0$ w.r.t. $G'$ in the first $n$ steps will be $\varepsilon_n'/6$ close to points in the unique trajectory $\{v_0,x_i\}_{i=1}^n$ w.r.t. $G_n$.\\
	The set $G_n(v_0) = G_0(v_0)$ has a special form satisfying 
	\[
		G_n(v_0)\cap B\left(x_1,\frac{\varepsilon_0'}{3}\right) = \{x_1\}.
	\]
	Since $\varepsilon'_n < \varepsilon'_0$ and $H(G'(v_0),G_n(v_0))<\beta_n \varepsilon_n'/6$ we can apply Lemma~\ref{lem:proj-close} and obtain that the projection of $v_0$ onto $G'(v_0)$ has to be contained in a ball around $x_1$, i.e.
	\[
		P_{G'(v_0)}(v_0) \subset \overline{B}(x_1,\beta_n\varepsilon'_n/6).
	\]
	Thus we have that
	\[
		\diam(P_{G'(v_0)}(v_0))\leq \frac{\beta_n\varepsilon'_n}{3}<\beta_n.
	\] 
	
	\noindent Take an arbitrary point $z_1\in P_{G'(v_0)}(v_0)$, hence 
	\begin{align}\label{eq:x_1-z_1}
		\|z_1-x_1\|<\frac{\beta_n\varepsilon_n'}{6},
	\end{align}
	and observe that 
	\[
		H(P_{G_n(x_1)}(x_1),P_{G'(z_1)}(z_1))\leq H(P_{G_n(x_1)}(x_1),P_{G_n(z_1)}(z_1)) + H(P_{G_n(z_1)}(z_1),P_{G'(z_1)}(z_1)).
	\]
	Using \eqref{eq:x_1-z_1} we can apply Proposition~\ref{prop:trajectory-unchanged}$(\ref{prop:part4:stability})$ and obtain $P_{G_n(x_1)}(x_1)=P_{G_n(z_1)}(z_1)$, hence 
	\[
		H(P_{G_n(x_1)}(x_1),P_{G_n(z_1)}(z_1)) = 0. 
	\]
	Further, since $H(G_n(z_1),G'(z_1))<\beta_n\varepsilon_n'/6$ and since Proposition~\ref{prop:trajectory-unchanged} $(\ref{prop:part4:stability})$ yields \linebreak $G_n(z_1)=G_n(x_1)$, we can apply Lemma~\ref{lem:proj-close} and obtain that 
	\[
		P_{G'(z_1)}(z_1)\subset \overline{B}\left(x_2,\frac{\beta_n\varepsilon_n'}{6}\right).
	\]
	We obtain in conclusion 
	\[
		H(P_{G_n(x_1)}(x_1),P_{G'(z_1)}(z_1)) = 		H(\{x_2\},P_{G'(z_1)}(z_1)) <\beta_n\varepsilon_n'/6.
	\]
	
	\noindent Assume now that we have $k<n$, a sequence $x_0,z_1,\ldots,z_{k-1}$ of successive approximations w.r.t. $G'$ and that $\|z_{k-1}-x_{k-1}\|<\beta_n\varepsilon'_n/6$ holds.\\
	We again obtain
	\begin{align}\label{eq:x_k-to-PGz_k}
		H(\{x_{k}\},P_{G'(z_k)}(z_k)) < \frac{\beta_n\varepsilon_n'}{6}
	\end{align}
	by the same means of applying Lemmas~\ref{lem:proj-close} and \ref{lem:constr-proj-ball-inductive} and the induction hypothesis $\|x_{k-1}-z_{k-1}\|<\beta_n\varepsilon_n'/6$. The equation \eqref{eq:x_k-to-PGz_k} again yields
	\[
		\|x_k-z_k\| < \frac{\beta_n\varepsilon_n'}{6}
	\]
	and therefore also
	\[
		P_{G'(z_{k-1})}(z_{k-1})\subset \overline{B}\left(x_k,\frac{\beta_n\varepsilon_n'}{6}\right) \Rightarrow \diam \left( P_{G'(z_{k-1})}(z_{k-1})\right)<\frac{\beta_n\varepsilon_n'}{3}.
	\]
	Hence for $\{v_0,z_k\}_{k=1}^n$, an initial segment of a trajectory w.r.t. $G'$, we have 
	\[\diam(P_{G'(z_k)}(z_k)) \leq\frac{\beta_n\varepsilon'_n}{3} \quad \text{ for } k=1,\ldots,n.\]
	
	Thus $G'\in \pazocal{A}_n(x_0)$ and $B_\rho(G_n,s)\subset \pazocal{A}_n(x_0)$.
\end{proof}

\begin{rem}
	It does not matter if $x_0$ has a unique projection onto $F(x_0)$ or not. What matters is that $G(x_0)$ creates an isolated point, in order to gain control over the trajectory. Therefore this step is done in any case.
\end{rem}

In the construction of the mappings $G_i$ we need $F$ to be a strict contraction, so the $G_i$ in the construction maintain a Lipschitz constant less than one. This is in particular needed for the trajectories to not repeat unless they are at a fixed point already, see Lemma~\ref{lem:never-repeating-trajectory}.

\noindent As a direct consequence of Proposition~\ref{prop:nowhere-dense} we obtain the following theorem.
\begin{thm}\label{thm:main}
	The sets $\pazocal{A}_n(x_0)$ contain an open and dense subset of $\M$. Moreover, the set
	\begin{align*}
	\pazocal{A}(x_0) =\bigcap_{n\in\N} \pazocal{A}_n (x_0) 
	\end{align*}
	is residual.
\end{thm}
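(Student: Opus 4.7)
The plan is to deduce Theorem~\ref{thm:main} as a formal corollary of Proposition~\ref{prop:nowhere-dense} combined with the Baire category theorem. The key observation for the first assertion is that the proof of Proposition~\ref{prop:nowhere-dense} shows more than mere nowhere-denseness of the complement: for every $F \in \M$ and every sufficiently small $r' > 0$, it explicitly constructs a mapping $G_n$ with $\rho(F, G_n) < r'$ together with a radius $s > 0$ such that the entire open ball $B_\rho(G_n, s)$ lies inside $\pazocal{A}_n(x_0)$. Setting $U_n := \mathrm{int}(\pazocal{A}_n(x_0))$, this exhibits a point of $U_n$ in every neighbourhood of every $F \in \M$, so $U_n$ is an open dense subset of $\M$ contained in $\pazocal{A}_n(x_0)$, which is exactly the first claim.

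For the second assertion, recall that $(\M, \rho)$ is complete and therefore a Baire space. Since each $U_n$ is open and dense, the intersection $\bigcap_{n \in \N} U_n$ is a dense $G_\delta$-subset of $\M$ and hence residual. The inclusion
\[
\bigcap_{n \in \N} U_n \;\subseteq\; \bigcap_{n \in \N} \pazocal{A}_n(x_0) \;=\; \pazocal{A}(x_0),
\]
together with the fact that any superset of a residual set is residual, yields that $\pazocal{A}(x_0)$ is residual.

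Essentially all the work has been absorbed into Proposition~\ref{prop:nowhere-dense}; there is no genuine obstacle at this stage, only the formal step of upgrading ``the complement is nowhere dense'' to ``the interior is open and dense'' (which is immediate from the construction producing a whole ball in $\pazocal{A}_n(x_0)$ rather than a single mapping) and then invoking Baire.
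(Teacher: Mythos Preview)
Your proof is correct and matches the paper's approach, which simply records the theorem as an immediate consequence of Proposition~\ref{prop:nowhere-dense}. One minor simplification: the first assertion already follows from the \emph{statement} of the proposition via the elementary equivalence ``$\M\setminus\pazocal{A}_n(x_0)$ nowhere dense $\Leftrightarrow$ $\mathrm{int}(\pazocal{A}_n(x_0))$ dense'', and the second then follows directly from the definition of meagre/residual without invoking Baire.
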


\begin{coro}
	For every $x_0\in C$, there is a residual subset $\F\subset \M$ such that all $F\in \F$ admit a regular trajectory starting with $x_0$, which converges to a fixed point of $F$.
\end{coro}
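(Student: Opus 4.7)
The plan is to combine Theorem~\ref{thm:main} with the convergence result for the typical mapping in $\M$ already cited in the introduction. By Theorem~\ref{thm:main}, the set $\pazocal{A}(x_0)\subset \M$ of mappings admitting a regular trajectory starting at $x_0$ is residual. The work of Reich and Zaslavski from \cite{ReichZaslavski2002,ReichZaslavski2016-AnalysisForum,ReichZaslavski2017} provides a second residual set $\pazocal{R}\subset \M$ on which every trajectory of successive approximations converges to a fixed point (since the typical mapping is contractive in the sense of Rakotch, which is the tool used in those papers to force convergence). I would then set $\F := \pazocal{A}(x_0) \cap \pazocal{R}$, which is residual as the intersection of two residual sets in a complete metric space, by the Baire category theorem.

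Next I would fix an arbitrary $F\in \F$. Since $F\in \pazocal{A}(x_0)$, there exists a unique sequence $\{x_n\}_{n=0}^\infty$ with $x_{n+1}\in P_{F(x_n)}(x_n)$, and each such projection is a singleton, so the trajectory is regular. Since $F\in \pazocal{R}$, this particular trajectory, being \emph{a} trajectory of $F$, must converge to some $x^*\in C$. To exhibit $x^*$ as a fixed point of $F$ directly from the limit, one may use $x_{n+1}\in F(x_n)$ together with Lemma~\ref{lem:hausdorff} and the nonexpansiveness of $F$ to estimate
\[
\dist(x^*,F(x^*)) \leq \|x^*-x_{n+1}\| + \dist(x_{n+1},F(x_n)) + H(F(x_n),F(x^*)) \leq \|x^*-x_{n+1}\| + \|x_n-x^*\|,
\]
which tends to $0$. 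Since $F(x^*)$ is compact, it follows that $x^*\in F(x^*)$.

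The main obstacle is the bookkeeping of which residual set provides the convergence: one must ensure that the convergence statement being invoked applies to \emph{every} trajectory of a typical $F$, not merely to some specifically constructed one, since in $\pazocal{A}(x_0)$ the trajectory starting at $x_0$ is forced by uniqueness of projections. Provided the cited Reich-Zaslavski result is of this form (which it is, being formulated for arbitrary sequences of successive approximations of typical Rakotch-contractive set-valued mappings), the corollary follows immediately from the intersection of two residual sets.
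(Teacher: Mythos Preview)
Your proposal is correct and follows essentially the same route as the paper. The paper's proof is a one-line citation to Theorems~4.2, 4.3 and 4.4 in \cite{ReichZaslavski2002} for the convergence to a fixed point, implicitly intersecting $\pazocal{A}(x_0)$ from Theorem~\ref{thm:main} with the residual set supplied by that reference; you make this intersection explicit and additionally furnish a direct verification that the limit lies in $F(x^*)$, which is not required given the citation but does no harm.
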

\begin{proof}
	For the convergence to a fixed point see Theorems 4.2, 4.3 and 4.4 in \cite{ReichZaslavski2002}.
\end{proof}

\begin{thm}
	Let $X$ be a separable Banach space. Then there is a residual set $\F\subset \M$ such that for every $F\in \F$ there is a residual subset $\pazocal U \subset C$ with the following property:
	For every $x_0\in \pazocal U$ the mapping $F$ admits a regular trajectory starting at $x_0$.
\end{thm}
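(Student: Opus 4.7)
The plan is to combine Theorem~\ref{thm:main} with the separability of $X$ via a countable dense set in $C$, then swap the roles of $F$ and $x_0$ using a Baire category argument inside $C$.

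First, pick a countable dense set $\{x_k\}_{k\in\N}\subset C$, which exists since $X$ is separable. Theorem~\ref{thm:main} says that for every $k$ the set $\pazocal A(x_k)=\bigcap_{n\in\N}\pazocal A_n(x_k)$ is residual in $\M$, so
\[
\F:=\bigcap_{k\in\N}\pazocal A(x_k)=\bigcap_{k,n\in\N}\pazocal A_n(x_k)
\]
is residual in $\M$ as a countable intersection of residual sets.

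Now fix $F\in\F$ and, for each $n\in\N$, consider
\[
V_n(F):=\{\,x_0\in C\mid F\in \pazocal A_n(x_0)\,\}.
\]
I would show that $V_n(F)$ is open and dense in $C$. Openness follows straight from the definition: if $F\in\pazocal A_n(x_0)$ with parameter $R>0$, then for every $x_0'\in B(x_0,R/2)$ one has $B(x_0',R/2)\subset B(x_0,R)$, so the defining condition on trajectories transfers to $x_0'$ with parameter $R/2$. Density is the reason we chose $\F$ as we did: by construction $F\in\pazocal A_n(x_k)$ for every $k$, so the dense set $\{x_k\}$ lies inside $V_n(F)$. By the Baire category theorem in the complete metric space $C$, the set $\pazocal U(F):=\bigcap_{n\in\N}V_n(F)$ is residual in $C$.

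Finally, for $x_0\in\pazocal U(F)$ we have $F\in\pazocal A_n(x_0)$ for every $n$, and I would extract a regular trajectory by an induction on the trajectory index $k$. For $k=0$ the trivial one-point sequence $x_0$ extends to arbitrary trajectories of any length, so $\diam(P_{F(x_0)}(x_0))\leq\beta_n$ for all $n$; since $\beta_n\to 0$, $P_{F(x_0)}(x_0)$ is a singleton $\{x_1\}$. Inductively, if $x_0,\ldots,x_k$ is the unique initial segment, then for any $n\geq k$ this segment extends to a length-$n$ trajectory, and the defining property of $\pazocal A_n(x_0)$ forces $\diam(P_{F(x_k)}(x_k))\leq\beta_n$; letting $n\to\infty$ yields $P_{F(x_k)}(x_k)=\{x_{k+1}\}$. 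This produces a regular trajectory starting at $x_0$.

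The main obstacle I expect is really just verifying openness of $V_n(F)$ cleanly from the definition of $\pazocal A_n(x_0)$; the rest is a standard two-step Baire category swap. A secondary technical point is extending a short trajectory to a length-$n$ trajectory when checking the inductive step, but since $F$ is compact-valued the projections $P_{F(x_j)}(x_j)$ are non-empty, so such extensions always exist.
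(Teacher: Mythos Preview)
Your proposal is correct and follows essentially the same route as the paper. The paper intersects $\pazocal A_n(y)$ over a countable dense $D\subset C$ to obtain $\pazocal F$, then for fixed $F\in\pazocal F$ builds the open dense sets $\pazocal U_n=\bigcup_{y\in D}B(y,R_{n,y})$ and intersects them; your sets $V_n(F)=\{x_0:F\in\pazocal A_n(x_0)\}$ are the same objects described more abstractly (indeed $\pazocal U_n\subset V_n(F)$ by the very openness argument you give), and your inductive extraction of the regular trajectory just unpacks the paper's displayed identity $\bigcap_n\pazocal A_n(x_0)=\{F:\text{trajectory from }x_0\text{ is regular}\}$.
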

\begin{proof}
	Since $X$ is separable we can find a countable dense subset $D\subset C$. We use for $n\in \N$ the sets $\pazocal A_n(y_i)$ from Theorem~\ref{thm:main}, set 
	\[
		\pazocal A_n := \bigcap_{y\in D} \pazocal A_n(y) \quad \text{and} \quad \pazocal F := \bigcap_{i=0}^{\infty} \pazocal A_i
	\]
	and notice that $\pazocal A_n$ and $\F$ are residual subsets of $\M$. Fix $F\in \F$ and let $n \in \N$. Then $F\in \pazocal A_n$ and hence also $F\in \pazocal A_n(y)$ for all $y\in D$. In particular this means that there exists an $R_{n,y}$, see the definition of the sets $\pazocal A_n$, such that $\diam\left(P_{F(v_k)}(v_k)\right)\leq \beta_n$ for all $v_0,\ldots, v_n$ with $v_0\in B(y,R_{n,y})$ and $v_{k+1}\in P_{F(v_k)}(v_k)$. 

	Define for $n\geq 1 $ the sets 
	\[
		\pazocal U_n := \bigcup_{y\in D} B\left( y, R_{n,y} \right) = \left\lbrace x\in C~ \middle|~ \|x-y\|<R_{n,y}  \text{ for a } y\in D \right\rbrace .
	\]
	Then the sets $\pazocal U_n$ are dense in $C$, as $D$ is dense itself. These sets consist of all points such that the trajectory starting at one of those points behaves well in the first $n$ steps. Further this yields that the set 
	\[
		\pazocal U = \bigcap_{i=1}^\infty \pazocal U_i
	\]
	is a residual subset of $C$. By the observations above for every $x_0\in \pazocal U$ the mapping $F$ has a regular trajectory starting at $x_0$.
\end{proof}

\paragraph{Acknowledgement.} This research was funded by the Austrian Science Fund (FWF): P 32523. I wish to thank Christian Bargetz for a lot of fruitful discussions, for reading this article carefully and his helpful comments. Furthermore I want to give special thanks to the referee for carefully reading this paper, pointing out mistakes and improvements and making valuable suggestions.
This version of the article has been accepted for publication after peer review but is not the Version of Record and does not reflect post-acceptance improvements, or any corrections.
\paragraph{Data Availability. }Data sharing not applicable to this article as no datasets were generated or analysed during the current study.

\vspace{8mm}
\noindent
Emir Medjic\\
Universit\"at Innsbruck\\
Department of Mathematics\\
Technikerstraße 13,\\
6020 Innsbruck,
Austria\\
\texttt{emir.medjic@uibk.ac.at}


\begin{thebibliography}{10}
	
	\bibitem{AlcantaraLee2022}
	J.~H. Alcantara and C.~Lee.
	\newblock Global convergence and acceleration of fixed point iterations of
	union upper semicontinuous operators: proximal algorithms, alternating and
	averaged nonconvex projections, and linear complementarity problems, 2022.
	
	\bibitem{BargetzDymondReich2017}
	C.~Bargetz, M.~Dymond, and S.~Reich.
	\newblock Porosity results for sets of strict contractions on geodesic metric
	spaces.
	\newblock {\em Topol. Methods Nonlinear Anal.}, 50(1):89--124, 2017.
	
	\bibitem{BargetzReich2019}
	C.~Bargetz and S.~Reich.
	\newblock Generic convergence of sequences of successive approximation in
	{B}anach spaces.
	\newblock {\em Pure Appl. Funct. Anal.}, 4(3):477--493, 2019.
	
	\bibitem{BenyaminiSternfeld1983}
	Y.~Benyamini and Y.~Sternfeld.
	\newblock Spheres in infinite-dimensional normed spaces are {L}ipschitz
	contractible.
	\newblock {\em Proc. Amer. Math. Soc.}, 88(3):439--445, 1983.
	
	\bibitem{BQTA2021}
	G.~Bouza, E.~Quintana, C.~Tammer, and V.~A. Tuan.
	\newblock The {F}ermat rule for set optimization problems with {L}ipschitzian
	set-valued mappings.
	\newblock {\em J. Nonlinear Convex Anal.}, 21(5):1137--1174, 2020.
	
	\bibitem{DaoTam2019}
	M.~N. Dao and M.~K. Tam.
	\newblock Union averaged operators with applications to proximal algorithms for
	min-convex functions.
	\newblock {\em J. Optim. Theory Appl.}, 181(1):61--94, 2019.
	
	\bibitem{DeBlasi1997}
	F.~S. De~Blasi.
	\newblock On typical compact convex sets in {H}ilbert spaces.
	\newblock {\em Serdica Math. J.}, 23(3-4):255--268, 1997.
	
	\bibitem{DeBlasiMyak1976}
	F.~S. De~Blasi and J.~Myjak.
	\newblock Sur la convergence des approximations successives pour les
	contractions non lin\'{e}aires dans un espace de {B}anach.
	\newblock {\em C. R. Acad. Sci. Paris S\'{e}r. A-B}, 283(4):Aiii, A185--A187,
	1976.
	
	\bibitem{DeBlasiMyjak1989}
	F.~S. De~Blasi and J.~Myjak.
	\newblock Sur la porosit\'{e} de l'ensemble des contractions sans point fixe.
	\newblock {\em C. R. Acad. Sci. Paris S\'{e}r. I Math.}, 308(2):51--54, 1989.
	
	\bibitem{DeBlasiZamfirescu1999}
	F.~S. De~Blasi and T.~I. Zamfirescu.
	\newblock Cardinality of the metric projection on typical compact sets in
	{H}ilbert spaces.
	\newblock {\em Math. Proc. Cambridge Philos. Soc.}, 126(1):37--44, 1999.
	
	\bibitem{DeBlasiZhivkov2002}
	F.~S. De~Blasi and N.~V. Zhivkov.
	\newblock The number of nearest and farthest points to a compactum in
	{E}uclidean space.
	\newblock {\em Israel J. Math.}, 130:347--363, 2002.
	
	\bibitem{deFigueiredoKarlovitz1967}
	D.~G. De~Figueiredo and L.~A. Karlovitz.
	\newblock On the radial projection in normed spaces.
	\newblock {\em Bull. Amer. Math. Soc.}, 73:364--368, 1967.
	
	\bibitem{DunklWilliams1964}
	C.~F. Dunkl and K.~S. Williams.
	\newblock Mathematical notes: a simple norm inequality.
	\newblock {\em Amer. Math. Monthly}, 71(1):53--54, 1964.
	
	\bibitem{dymond2021}
	M.~Dymond.
	\newblock Porosity phenomena of non-expansive, banach space mappings, 2021.
	
	\bibitem{IvesPreiss2000}
	D.~J. Ives and D.~Preiss.
	\newblock Not too well differentiable {L}ipschitz isomorphisms.
	\newblock {\em Israel J. Math.}, 115:343--353, 2000.
	
	\bibitem{MyjakRyszard2001}
	J.~Myjak and R.~Rudnicki.
	\newblock On the typical structure of compact sets.
	\newblock {\em Arch. Math. (Basel)}, 76(2):119--126, 2001.
	
	\bibitem{Pianigiani2015}
	G.~Pianigiani.
	\newblock Generic properties of successive approximations in {H}ilbert spaces.
	\newblock {\em J. Nonlinear Convex Anal.}, 16(6):1097--1111, 2015.
	
	\bibitem{Rakotch1962}
	E.~Rakotch.
	\newblock A note on contractive mappings.
	\newblock {\em Proc. Amer. Math. Soc.}, 13:459--465, 1962.
	
	\bibitem{ReichZaslavski2002}
	S.~Reich and A.~J. Zaslavski.
	\newblock Convergence of iterates of nonexpansive set-valued mappings.
	\newblock In {\em Set valued mappings with applications in nonlinear analysis},
	volume~4 of {\em Ser. Math. Anal. Appl.}, pages 411--420. Taylor \& Francis,
	London, 2002.
	
	\bibitem{ReichZaslavski2005}
	S.~Reich and A.~J. Zaslavski.
	\newblock Convergence of iterates of typical nonexpansive mappings in {B}anach
	spaces.
	\newblock {\em C. R. Math. Acad. Sci. Soc. R. Can.}, 27(4):121--128, 2005.
	
	\bibitem{ReichZaslavski2008}
	S.~Reich and A.~J. Zaslavski.
	\newblock A note on {R}akotch contractions.
	\newblock {\em Fixed Point Theory}, 9(1):267--273, 2008.
	
	\bibitem{ReichZaslavski2016-AnalysisForum}
	S.~Reich and A.~J. Zaslavski.
	\newblock Contractive set-valued mappings.
	\newblock {\em Nonlinear Anal. Forum}, 21(2):145--152, 2016.
	
	\bibitem{ReichZaslavski2016}
	S.~Reich and A.~J. Zaslavski.
	\newblock Two porosity theorems for nonexpansive mappings in hyperbolic spaces.
	\newblock {\em J. Math. Anal. Appl.}, 433(2):1220--1229, 2016.
	
	\bibitem{ReichZaslavski2017}
	S.~Reich and A.~J. Zaslavski.
	\newblock Nonexpansive set-valued mappings on bounded star-shaped sets.
	\newblock {\em J. Nonlinear Convex Anal.}, 18(7):1383--1392, 2017.
	
	\bibitem{Steckin1963}
	S.~B. Ste\v{c}kin.
	\newblock Approximation properties of sets in normed linear spaces.
	\newblock {\em Rev. Math. Pures Appl.}, 8:5--18, 1963.
	
	\bibitem{Tam2018}
	M.~K. Tam.
	\newblock Algorithms based on unions of nonexpansive maps.
	\newblock {\em Optim. Lett.}, 12(5):1019--1027, 2018.
	
\end{thebibliography}
\end{document}